\DeclareMathAlphabet{\mathpzc}{OT1}{pzc}{m}{it}
\DeclareMathOperator{\Tr}{Tr} 
\newtheorem{lemma}[subsection]{Lemma}
\newtheorem{prop}[subsection]{Proposition}
\newtheorem{thm}[subsection]{Theorem}
\newtheorem{cor}[subsection]{Corollary}
\newtheorem*{conj}{Conjecture}
\newcommand{\comment}[1]{\ifthenelse{\boolean{dum}}{
{\par\noindent\Huge\ding{46}} \fbox{\parbox{10cm}{#1}}\par}{}}
\DeclareMathOperator{\card}{card}
\DeclareMathOperator{\vol}{vol}
\newcommand{\qdisc}{\mathpzc{q}}
\newcommand{\pdisc}{\mathpzc{p}}
\newcommand{\tdisc}{t}
\newcommand{\xdisc}{x}
\newcommand{\ydisc}{y}
\newcommand{\zdisc}{z}
\newcommand{\phidisc}{\phi} 
\newcommand{\Wdisc}{\mathpzc{W}}
\newcommand{\Kdisc}{\mathpzc{K}}
\newcommand{\Xdisc}{\mathpzc{X}}
\newcommand{\Qdisc}{\mathpzc{Q}}
\newcommand{\Qcont}{Q}
\newcommand{\qcont}{q}
\newcommand{\tcont}{t}
\newcommand{\xcont}{x} 
\newcommand{\ycont}{y}
\newcommand{\phicont}{\varphi}
\newcommand{\Wcont}{W}
\newcommand{\Xcont}{\mathbf{X}}
\newcommand{\Kcont}{\mathbf{K}}
\newcommand{\px}{x}
\begin{document}
\bibliographystyle{alpha}
\comment{
$ $Id: article.tex,v 1.21 2008/02/19 05:42:32 enord Exp $ $
}
\title{On the Shuffling Algorithm for Domino Tilings}
\author{Eric Nordenstam}
\begin{abstract}
We study the dynamics of a certain discrete
model of interacting  particles that comes from
the so called shuffling algorithm for sampling a
random tiling of an Aztec diamond. 
It turns out that the transition probabilities
have a particularly convenient determinantal form.
An analogous formula in a continuous setting 
has recently been obtained by Jon Warren
studying certain model of  interlacing Brownian motions
which can be used to construct Dyson's non-intersecting
Brownian motion.

We conjecture that Warren's model can be recovered as
a scaling limit of our discrete model and 
prove some partial results in this direction. 
As an application to one of these results  we use it 
to  rederive the known 
result that random tilings of an Aztec diamond, 
suitably rescaled near a turning point, converge
to the GUE minor process. 
\end{abstract}

\maketitle
\section{Introduction}
\label{sec:introduction}
There has been a lot of work in recent years connecting
tilings of various planar regions with random matrices. 
One particular model that has been intensely studied 
is domino tilings of a so called \emph{Aztec diamond}. 
One way to analysing that model,
\cites{johansson:arctic_circle,johansson:discrete_orthogonal,
johansson:gue_minors}, 
 is to define a particle process
corresponding to the tilings so that uniform
measure on all tilings induces some measure on this particle process.

In this article we will study the so called \emph{shuffling algorithm},
described in 
\cites{elkies:alternating_sign_matrices_II,propp:generalized_domino},
which in various variants
 can be used either to count or to enumerate all tilings of 
the Aztec diamond or to sample a random such tiling.

The sampling of a random tiling by this method is an iterative process.
Starting with a tiling of an order $n-1$ 
Aztec diamond, a certain procedure is performed, 
producing a random tiling of order $n$. 
This procedure is usually described in terms of the dominoes
which should be moved and created according to a certain procedure.
We will instead look at this algorithm as a certain 
dynamics on the particle process mentioned above. 

The detailed dynamics of the particle process will be presented in
section~\ref{sec:particle-process} and how it is obtained from the traditional
formulation of the shuffling algorithm is 
presented in section~\ref{sec:shuffling-algorithm}.
For now, consider a process  $\Xdisc(t)=(X^1(t), \dots, X^m(t))$ 
for $t=0$, $1$, $2$, \dots, where 
$X^k(t)=(X^k_1(t),\dots, X^k_k(t))\in \mathbb{Z}^{k}$. 
The quantity $X^j_i(t)$ represents the position of the $i$:th particle
on line $j$ after  $t-j$ steps of the shuffling algorithm have
been performed. (The reason for the $t-j$ is technical
convenience.)
We will  show that 
\begin{thm}
\label{thm:dyson_bm}
For fixed  $k$, consider only the component
 $X^k(t)$ from $\Xdisc(t)$
rescaled according to 
\begin{equation}
\tilde X^n_i(t) = \frac{X^n_i(Nt) - \frac{1}{2} Nt}{\frac{1}{2}\sqrt{N}}
\end{equation}
and defined by linear interpolation for non-integer values of $Nt$. 
The process $ \tilde X^n(t)$ converges to a Dyson Brownian motion
with all particles started at the origin as 
$N\rightarrow\infty$, 
in the sense of convergence of
 finite dimensional distributions. 
\end{thm}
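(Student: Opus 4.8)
The plan is to work entirely with the marginal law of the single line $X^k$ (the fixed line called $n$ in the statement) and to pass to the limit directly at the level of finite-dimensional distributions, which is all that is claimed. The starting observation is that, by the determinantal form of the transition probabilities established in Section~\ref{sec:particle-process}, the line $X^k=(X^k_1,\dots,X^k_k)$ is itself a Markov chain whose one-step kernel is a Doob $h$-transform of $k$ independent single-particle walks by the Vandermonde function $\Delta(x)=\prod_{1\le i<j\le k}(x_j-x_i)$, i.e.
\begin{equation}
  p(x,y)=\frac{\Delta(y)}{\Delta(x)}\,\det\bigl(\phi(x_i,y_j)\bigr)_{i,j=1}^{k},
\end{equation}
where $\phi$ is the single-particle step distribution. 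This is the discrete analogue of the fact, underlying Warren's model, that a single interlacing level forms a non-intersecting (Dyson-type) ensemble.

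First I would write down the joint law at times $0=t_0<t_1<\dots<t_r$ by Chapman--Kolmogorov. Because the Vandermonde prefactors telescope, the intermediate ones cancel and one is left with
\begin{equation}
  \mathbb{P}\bigl(X^k(t_l)=x^{(l)},\ l=1,\dots,r\bigr)
  =\frac{\Delta(x^{(r)})}{\Delta(x^{(0)})}\prod_{l=1}^{r}\det\bigl(\phi_{t_l-t_{l-1}}(x^{(l-1)}_i,x^{(l)}_j)\bigr),
\end{equation}
with $\phi_t$ the $t$-step single-particle transition and $x^{(0)}$ the (tightly packed) initial configuration of line $k$. This exhibits the finite-dimensional distributions as a finite sum of products of single-particle kernels, exactly the form amenable to a local limit theorem.

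Next I would insert the diffusive rescaling of the statement and apply a local central limit theorem to each scalar kernel: since $\phi_{N(t_l-t_{l-1})}$ is an $N(t_l-t_{l-1})$-fold convolution of a fixed lattice step law, one has $\phi_{N(t_l-t_{l-1})}(x_i,y_j)\sim(\tfrac12\sqrt N)^{-1}g_{t_l-t_{l-1}}(\tilde y_j-\tilde x_i)$ uniformly on the diffusive scale, where $g_t$ is the Gaussian heat kernel and the drift $\tfrac12 Nt$ is absorbed by the centering. Substituting into the determinants, the powers of $\tfrac12\sqrt N$ coming from the $k\times k$ determinants and from the Jacobian of the change of variables cancel (the Vandermonde ratio being scale invariant), and since each determinant is a finite sum of products of entries converging uniformly on compact sets, the limit may be taken termwise. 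The limiting density is the Karlin--McGregor determinant built from Gaussian kernels, reweighted by $\Delta$, which is the finite-dimensional distribution of a non-intersecting Brownian ensemble.

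The main obstacle is the degenerate, confluent initial condition: all particles start at the origin, where $\Delta$ vanishes, so both the discrete $h$-transform and the limiting formula are singular there. One must show that the packed discrete start $x^{(0)}$—whose rescaled spacings tend to $0$—produces in the limit precisely the entrance law of Dyson Brownian motion from the origin. Concretely, the vanishing of $\Delta(\tilde x^{(0)})$ in the denominator must be matched against the confluent degeneration of the first Gaussian determinant $\det(g_{t_1}(\tilde x^{(0)}_i,\tilde x^{(1)}_j))$, whose leading behaviour as the starting points coalesce is governed by a Wronskian-type determinant of derivatives of $g_{t_1}$; the two effects cancel to yield a finite, nonzero entrance density. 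Making this rigorous requires a local limit theorem that is uniform near the coincident starting point together with a justification of the interchange of the limit $N\to\infty$ with this confluent degeneration. Once this entrance law is identified with the one computed by Warren (equivalently, with the eigenvalue density of a Hermitian Brownian motion started at $0$), the limiting finite-dimensional distributions coincide with those of Dyson Brownian motion with all particles started at the origin, which proves the claim.
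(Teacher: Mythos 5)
Your overall strategy is correct, and it is a genuinely different route from the one in the paper. The paper never argues on a single line directly: it proves convergence of the pair $(X^k,X^{k+1})$ (theorem~\ref{thm:Xdisc_to_Xcont}), built from uniform convergence of the block kernels $\qdisc^n_t$ (lemma~\ref{thm:transition_density}) and of the packed-start kernel to Warren's entrance law $\nu^n_t$ (lemma~\ref{thm:entrancelaw}), and then obtains theorem~\ref{thm:dyson_bm} by restriction to the $X^k$ component. Your single-line argument --- the $h$-transformed Karlin--McGregor kernel of corollary~\ref{thm:discrete_dyson_bm} and (\ref{eq:15}), telescoping of the Vandermonde prefactors along Chapman--Kolmogorov, termwise local CLT inside the $k\times k$ determinants, with the powers of $\frac{1}{2}\sqrt{N}$ cancelling against the Riemann-sum mesh and the Vandermonde ratio being scale invariant --- is all sound, and is arguably more economical since it avoids the $(2n+1)\times(2n+1)$ kernels $\qdisc^n_t$ entirely; this is legitimate because the marginal claim of the theorem concerns only one line.

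The one place where your text is a program rather than a proof is exactly the step you flag yourself: the confluent initial condition. You propose to handle it by matching the vanishing of the rescaled Vandermonde against the Wronskian-type degeneration of the first Gaussian determinant; this can be made rigorous, but it requires a local limit theorem with derivatives, i.e.\ control of $(\frac{\sqrt{N}}{2})^{m+1}\Delta^m\phidisc^{(Nt)}\rightarrow\phicont_t^{(m)}$ uniformly on compacts for all $m\leq k-1$ (the paper records only $m\leq 1$), together with uniformity as the rescaled starting points coalesce at rate $N^{-1/2}$. The paper sidesteps this entirely by an exact finite-$N$ computation: for the packed start, lemma~\ref{thm:qn_to_pn_lemma} collapses the kernel, the resulting one-line determinant is of the form
\begin{equation}
\det\left[2^{-t}\binom{t}{x_j-i}\right]_{1\leq i,j\leq k},
\end{equation}
which is evaluated in closed form by Krattenthaler's formula, after which a single application of Stirling yields the entrance law (the marginal version being $\mu^n_t$ of (\ref{eq:14})) with no confluent limit and no interchange-of-limits argument. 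Since your first factor $\pdisc^{k,+}_{Nt_1}(\bar x^k,\cdot)$ is precisely of this binomial-determinant form, the cleanest way to close your gap is to import that exact evaluation verbatim rather than carry out the uniform Wronskian degeneration; either way, the identification of the limit with the GUE-type entrance density, and hence with Dyson Brownian motion started at the origin, then goes through as you describe.
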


The full process $(\Xdisc(t))_{t=0,1,\dots}$ has remarkable
similarities to, and is we believe a discretization of,
a process studied recently by Warren, \cite{warren:dyson_brownian_motions}.
It consists of many interlaced Dyson Brownian motions and is
here briefly described in section~\ref{sec:warren}. 
We will denote that process $(\Xcont(t))_{t\geq 0}$.
There is reason to believe the following.
\begin{conj}
\label{thm:asymptotics}
Consider  the process $(\Xdisc(t))_{t=0,1,\dots}$
rescaled according to 
\begin{equation}
\tilde X^n_i(t) = \frac{X^n_i(Nt) - \frac{1}{2} Nt}{\frac{1}{2}\sqrt{N}}
\end{equation}
and defined by linear interpolation for non-integer values of $Nt$. 
The process $\tilde \Xdisc(t)$ converges to Warren's process $\Xcont(t)$ 
as
$N\rightarrow\infty$, 
in the sense of convergence of
 finite dimensional distributions. 
\end{conj}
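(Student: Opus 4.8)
The plan is to exploit the determinantal structure on both sides. The abstract notes that the discrete transition probabilities have a convenient determinantal form and that Warren's process enjoys an analogous continuous formula; I would first promote both statements to the level of the full space--time process. Concretely, I would show that the multi-time, multi-level array obtained from $(\Xdisc(t))$ is a determinantal point process in the sense of Eynard--Mehta, with an explicit extended correlation kernel $\Kdisc = \Kdisc(t,x;t',x')$ indexed by the time and level labels, and similarly that Warren's process $(\Xcont(t))$ is determinantal with extended kernel $\Kcont$. Once both processes are realized as determinantal point processes, convergence of finite-dimensional distributions reduces to convergence of the rescaled correlation kernel.

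The central analytic step is then to prove that, under the diffusive scaling of the theorem (space by $\tfrac12\sqrt N$ about the level-dependent drift $\tfrac12 Nt$, time by $N$), one has
\begin{equation}
\tfrac12\sqrt N \, e^{\phi_N(x,x')}\,\Kdisc\bigl(Nt, \tfrac12 Nt + \tfrac12\sqrt N\, x;\, Nt', \tfrac12 Nt' + \tfrac12\sqrt N\, x'\bigr) \longrightarrow \Kcont(t,x;t',x')
\end{equation}
uniformly for $(t,x,t',x')$ in compact sets, where $e^{\phi_N}$ is an explicit conjugation factor that cancels the oscillatory prefactors and leaves all determinants invariant. I would obtain this from a double-contour-integral representation of $\Kdisc$ followed by a saddle-point analysis, the relevant critical point being dictated by the drift $\tfrac12$; alternatively, the single-step transition kernel is that of a simple random walk, so a local central limit theorem supplies the Gaussian building blocks and the Eynard--Mehta composition propagates them to the full kernel.

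Having established uniform kernel convergence, I would finish by upgrading it to convergence of finite-dimensional distributions. Fixing times $t_1<\dots<t_p$ and a bounded spatial region, the correlation functions are finite determinants of $\Kdisc$, so pointwise kernel convergence yields convergence of all correlation functions; together with a uniform Gaussian tail bound on $\Kdisc$ that dominates the determinant expansion, this gives convergence of the joint law. The singular initial condition---all particles started at the origin, i.e.\ the densely packed configuration---enters only through the initial factor of the kernel and must be shown to produce the correct entrance law for $\Xcont$ from a coincident start.

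The main obstacle is the uniform control needed across all levels simultaneously. Deriving the saddle-point asymptotics for a single level (and thereby the one-level statement of Theorem~\ref{thm:dyson_bm}) is tractable, but the extended kernel couples different levels $j$ through the interlacing constraints, and the conjugation factors $\phi_N$ required to obtain a finite limit differ from level to level; reconciling these and proving the error estimates uniformly in the level label, while simultaneously handling the degenerate coincident start, is precisely the difficulty that keeps the full statement a conjecture rather than a theorem. This is also why only partial results---convergence of a single level to Dyson Brownian motion, and the GUE-minor limit near a turning point---are established here.
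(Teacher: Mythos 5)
The statement you are trying to prove is stated in the paper as a \emph{conjecture}, and the paper does not prove it: it establishes only partial results, namely theorem~\ref{thm:Xdisc_to_Xcont} (finite-dimensional convergence of a single fixed pair $(X^k,X^{k+1})$, obtained from the explicit two-level transition probabilities $\qdisc^{n,+}_t$, the entrance-law limit of lemma~\ref{thm:entrancelaw}, the kernel limit of lemma~\ref{thm:transition_density}, and a Riemann-sum argument), together with its corollaries theorem~\ref{thm:dyson_bm} and the fixed-time GUE-minor limit. Your proposal likewise does not prove the statement — your final paragraph explicitly concedes that the uniform, level-by-level control of the extended kernel asymptotics and the coincident start cannot be carried out — so what you have written is a program with an acknowledged gap, not a proof, and in that respect it is at least honest about the actual status of the claim.

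Beyond the conceded analytic difficulty, there is a more basic gap at the very first step of your program: you assert that the full multi-time, multi-level array generated by the shuffling dynamics (and likewise Warren's process $\Xcont$) is an extended determinantal point process in the Eynard--Mehta sense, with a single space--time kernel $\Kdisc(t,x;t',x')$ indexed by time and level. Nothing in the paper supports this, and it is not known to hold for arbitrary collections of (time, level) pairs: the determinantal formulas available here are Markov \emph{transition kernels} for one pair of consecutive levels $(X^k,X^{k+1})$, which is a much weaker structure than a correlation kernel for the whole array, and for interlacing dynamics of this kind determinantal correlations are generally available only at a fixed time or along restricted families of space--time points, not for general time-like joints across levels. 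Consequently the claimed reduction ``finite-dimensional distributions follow from kernel convergence'' has no object to apply to. The paper's actual mechanism for joining levels is the conditional-independence structure — $X^{k+1}$ is conditionally independent of $X^1,\dots,X^{k-1}$ given $X^k$, exploited through proposition~\ref{thm:qn_to_y} and theorem~\ref{thm:qn_to_pn} — and the genuinely missing idea, in both the paper and your proposal, is how to pass from convergence of each pair $(X^k,X^{k+1})$ at multiple times (which lemmas~\ref{thm:entrancelaw} and~\ref{thm:transition_density} already deliver) to joint multi-time convergence of the entire interlaced array; replacing that gap with an unproven global determinantal ansatz does not close it.
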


The key to our asymptotic analysis of the shuffling algorithm
is that the transition probabilities of $(X^k, X^{k+1})$ can be written
down in a convenient determinantal form, see proposition \ref{thm:xfer}. 
These formulas mirror beautifully  formulas obtained by
 Warren.

As an application of our results we will 
use it to rederive an asymptotic result about 
random tilings near the point where the arctic circle 
touches the edge of the diamond.  
This result was first stated in~\cite{johansson:arctic_circle}
and proved in~\cite{johansson:gue_minors}. 

Recall that the \emph{Gaussian Unitary Ensemble}, or GUE for short,
is a probability measure on Hermitian matrices
with density $Z_n^{-1} e^{-\Tr H^2/2}$ 
where $Z_n$ is a normalisation constant that depends on 
the dimension $n$ of the matrix.
Let $H=(h_{rs})_{1\leq r,s\leq n}$  a GUE matrix and
denote its principal minors by
$H_j=(h_{rs})_{1\leq r,s\leq j}$.
Let $\lambda^j=(\lambda^j_1,\dots, \lambda^j_j)$ be the 
eigenvalues of $H_j$. 
Then  $\Lambda=(\lambda^1,\dots,\lambda^n)\in \mathbb{R}^{n(n+1)/2}$
is the so called GUE minor process.

\begin{thm}[Theorem 1.5 in \cite{johansson:gue_minors}.]
\label{thm:GUE_process}
Let the $\mathbb{R}^{n(n+1)/2}$-valued  process
 $\tilde\Xdisc(t)=(\tilde X^1(t), \dots, \tilde X^n(t))$
 be a rescaled version
of $\Xdisc(t)$ with 
\begin{equation}
 \tilde X^j_i(t)= \frac{X^j_i(t) - \frac{t}{2}}{\frac{1}{2}\sqrt{t}}.
\end{equation}
Then $\tilde\Xdisc(t)\rightarrow \Lambda$ as $t\rightarrow \infty$
in the sense of weak convergence of probability measures. 
\end{thm}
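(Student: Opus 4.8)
The plan is to exploit the determinantal structure supplied by Proposition~\ref{thm:xfer} to realise the entire configuration at a fixed time as a determinantal point process, and then to rederive the result of \cite{johansson:gue_minors} by a saddle--point analysis of the correlation kernel. Concretely, view the collection of all particles at time $t$,
\[
  \mathcal{P}_t = \{(j,X^j_i(t)) : 1\le i\le j\le n\}\subset \{1,\dots,n\}\times\mathbb{Z},
\]
as a point process on finitely many copies of $\mathbb{Z}$. The first step is to show that $\mathcal{P}_t$ is determinantal and to write down its correlation kernel $\Kdisc_t\big((j,x),(j',x')\big)$ explicitly. This should follow from Proposition~\ref{thm:xfer}: because the transition probabilities of $(X^k,X^{k+1})$ have determinantal form, an Eynard--Mehta type computation (equivalently, a Lindström--Gessel--Viennot argument for the non--intersecting paths encoding the dynamics) expresses the joint law of $\mathcal{P}_t$ as a product of determinants, so that its correlation functions are minors of a single kernel $\Kdisc_t$. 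I would record $\Kdisc_t$ in a double contour--integral form, which is the shape best suited to asymptotics.

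The second step is the rescaling and the limit. Under $\tilde X^j_i(t) = (X^j_i(t)-t/2)/(\tfrac12\sqrt{t})$ the natural change of variables is $x = \tfrac{t}{2} + \tfrac12\sqrt{t}\,\xi$, with the levels $j,j'$ held fixed and the Jacobian $\tfrac12\sqrt{t}$ absorbed into the kernel. Since correlation functions are invariant under a conjugation $\Kdisc_t\mapsto e^{g(j,x)-g(j',x')}\Kdisc_t$, I am free to strip off an explicit gauge factor before passing to the limit. The goal is then to prove that
\[
  \tfrac12\sqrt{t}\;\Kdisc_t\big((j,\tfrac{t}{2}+\tfrac12\sqrt{t}\,\xi),(j',\tfrac{t}{2}+\tfrac12\sqrt{t}\,\xi')\big)
  \longrightarrow \Kcont\big((j,\xi),(j',\xi')\big)
\]
as $t\to\infty$, where $\Kcont$ is the correlation kernel of the GUE minor process $\Lambda$. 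The right--hand side is known in closed form as a sum of products of Hermite functions together with the interlacing defect term present when $j<j'$; it is precisely the kernel whose determinants reproduce the joint eigenvalue density of the principal minors $H_1,\dots,H_n$ of a GUE matrix.

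The heart of the argument, and the step I expect to be the main obstacle, is the saddle--point analysis establishing this convergence uniformly for $\xi,\xi'$ in compact sets. After inserting the gauge factor, the integrand in the contour representation of $\Kdisc_t$ acquires a phase with a \emph{double} saddle point at the edge value corresponding to the turning point $x=t/2$; the Gaussian fluctuations around this saddle, on the $\sqrt{t}$ scale, are exactly what produce the Hermite kernel. The technical work is to choose steepest--descent contours through the saddle, to show that the contributions away from it are exponentially negligible, and to control the finitely many residue/interlacing terms that distinguish the two level indices $j,j'$. Once kernel convergence is in hand together with a uniform domination bound, convergence of all correlation functions follows, and hence the asserted weak convergence $\tilde\Xdisc(t)\to\Lambda$ of Theorem~\ref{thm:GUE_process}. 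As a consistency check, the single--level marginal obtained this way agrees with the $N=t$, rescaled--time--one specialisation of Theorem~\ref{thm:dyson_bm}, since a Dyson Brownian motion run for unit time from the origin has the GUE$_n$ eigenvalue law.
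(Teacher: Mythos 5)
Your outline is viable, but it is genuinely different from the paper's proof --- indeed, it is essentially the strategy of the original proof in \cite{johansson:gue_minors}, which this paper deliberately avoids. Two comments on your plan before the comparison. First, the determinantal form of the two-level transition probabilities is not by itself what makes an Eynard--Mehta computation go through at a fixed time; what you need is the fixed-time product structure of the law of the whole array, namely that conditionally on $X^n(t)$ the lower levels are uniform on the interlacing cone $\Kdisc(X^n(t))$, so that the joint law is $\pdisc^n_t(\bar x^n,x^n)\,\chi(x^1,x^2)\cdots\chi(x^{n-1},x^n)/\card(\Kdisc(x^n))$ as in (\ref{eq:13}); since each $\chi(x^k,x^{k+1})$ is a determinant of indicator functions and $\pdisc^n_t$ is a determinant, this is of the required form. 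That structural fact is derived in the paper from the conditional independence of the levels and the measure $\lambda^{n-1}$, so your step one silently requires precisely the ingredient the paper makes explicit. Second, having obtained (\ref{eq:13}), the paper then takes a shortcut that makes your entire steepest-descent program unnecessary: it inserts the rescaling $\tilde X^k_i=(X^k_i(N)-\tfrac12 N)/(\tfrac12\sqrt N)$ directly into the explicit density, evaluates the determinant $\det\bigl[2^{-t}\binom{t}{x_i-j}\bigr]$ in closed form by Krattenthaler's theorem \cite{krattenthaler:advanced}, and applies Stirling's approximation exactly as in the proof of lemma~\ref{thm:entrancelaw}, obtaining the limit density $\mu^n_1(x^n)\,\chi(x^1,x^2)\cdots\chi(x^{n-1},x^n)/\vol(\Kcont(x^n))$ (the final theorem of section~\ref{sec:asymptotics}); this limit is identified with the GUE minor law via \cite{baryshnikov:gue_queu}. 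No correlation kernel, no double contour integrals, no saddle-point contours, and no uniform domination bounds are needed --- weak convergence of the finitely many particles follows from pointwise convergence of the explicit densities. The trade-off is real: your kernel route, once the asymptotics you correctly flag as the main obstacle are carried out, yields more (the full limiting correlation kernel, uniformity suitable for extended or growing-$n$ statements), whereas the paper's route buys a short, elementary proof tied to the very explicit product structure of this particular fixed-time law. As a proof of Theorem~\ref{thm:GUE_process} alone, your plan is sound in outline but defers all of the hard analysis, while the paper's argument is complete with a one-line computation once (\ref{eq:13}) is in hand.
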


To put this in perspective, let us note that a similar result 
for lozenge tilings 
is known from Okounkov and Reshetikhin~\cite{okounkov:birth}.
They discuss the fact that, for quite general regions,
that close to a so called turning point 
the GUE minor process can be obtained in a limit. 
A turning point is, just as in our situation, 
 where the disordered region is tangent to the domain boundary.

\section{The Aztec Diamond Particle Process}
\label{sec:particle-process}
We will here content ourselves with stating the rules
of the particle dynamics that we will study.
The reader will in section~\ref{sec:shuffling-algorithm} find
a description the traditional formulation of the shuffling algorithm
and how that relates to the formulas below.

Consider the process  $(\Xdisc(t))=(X^1(t), \dots, X^n(t))$ 
for $t=0$, $1$, $2$, \dots, where 
$X^k(t)=(X^k_1(t),\dots, X^k_k(t))\in \mathbb{Z}^{k}$.
It satisfies the initial condition
\begin{equation}
X^k(0)= \bar x^k
\end{equation}
 where $\bar x^j_i=i$ for $1\leq i\leq j$.
At each time $t$ 
the process fulfils the interlacing condition
\begin{equation}
\label{eq:4}
X^{k}_i(t) \leq X^{k-1}_i(t) < X^{k}_{i+1}(t)
\end{equation}
and evolves in time according to 
\begin{equation}
\label{eq:1}
\begin{aligned}
X^1_1(t)&=X^1_1(t-1)+\beta^1_1(t)  \\
X^j_1(t)&=X^j_1(t-1)+\beta^j_1(t) \\
&\quad - \mathbf{1}\{X^j_1(t-1)+\beta^j_1(t)= X^{j-1}_1(t)+1\}
&&\text{for $j\geq 2$}\\
X^j_j(t)&=X^j_j(t-1)+\beta^j_j(t)\\
&\quad +\mathbf{1}\{X^j_1(t-1)+\beta^j_1(t)= X^{j-1}_{j-1}(t)\}
&&\text{for $j\geq 2$}\\
X^j_i(t)&=X^j_i(t-1)+\beta^j_i(t)\\
&\quad- \mathbf{1}\{X^j_i(t-1)+\beta^j_i(t)= X^{j-1}_i(t)+1\}\\
&\quad+\mathbf{1}\{X^j_i(t-1)+\beta^j_i(t)= X^{j-1}_{i-1}(t)\}
&&\text{for $j\geq 3$ and $1<i<j$.}
\end{aligned}
\end{equation}
for $t=1$, $2$, \dots where all the $(\beta^j_i(t))_{i,j, t}$ are i.i.d.
unbiased coin tosses, satisfying
$\mathbb{P}[\beta^1_1(1)=0]=\mathbb{P}[\beta^1_1(1)=1]=\frac{1}{2}$.

One way to think about this is that at each time $t$,
this is 
a set of particles on $n$ lines.
The $k$:th line has $k$ particles
on it at positions $X^k_1$, \dots, $X^k_k$. 
At each time step each of these particles either stays or
jumps one unit step forward independent of all others except
that the particles on line $k$ can
force particles on line $k+1$ to jump or to stay to 
enforce the the interlacing condition~(\ref{eq:4}).
Also note that the interlacing implies that $X^k_i<X^k_{i+1}$
at each time $t$, i.e. two particles cannot occupy the same
space at the same time.

As mentioned  we can write down transition probabilities
for this process on a particularly convenient determinantal form.
Define 
$\delta_i:\mathbb{Z}\rightarrow\mathbb{Z}$
such that $\delta_i(x)=1$ if $i=x$ and  $\delta_i(x)=0$ otherwise.
Let us first introduce some notation.

\begin{align*}
(\phidisc*\psi)(\xdisc)&=\sum_{s+t=\xdisc}\phidisc(s)\psi(t)&&\text{(Convolution product)}\\
\phidisc^{(0)}&=\delta_0\\
\phidisc^{(n)}&=\phidisc^{(n-1)}*\phidisc &&\text{for $n=1,2,\dots$}\\
\Delta\phidisc&=(\delta_0 - \delta_{1})*\phidisc&&\text{(Backward difference)}\\
\Delta^{-1}\phidisc(\xdisc)&=\sum_{\ydisc=-\infty}^\xdisc \phidisc(\ydisc)\\
\bar\Delta\phidisc&=(-\delta_0 + \delta_{-1})*\phidisc&&\text{(Forward difference)}\\
\bar\Delta^{-1}\phidisc(\xdisc)&=\sum_{\ydisc=-\infty}^{\xdisc-1} \phidisc(\ydisc)\\
\end{align*}

Let 
 $\Wdisc^{n+1,n}= \{ (x,y):x_1\leq y_1<x_2\leq \dots\leq y_n<x_{n+1} \}
\subset\mathbb{Z}^{n+1}\times\mathbb{Z}^n$. 
For $(\xdisc,\ydisc),(\xdisc',\ydisc')\in \Wdisc^{n+1,n}$
and $t=0,1,\dots$, define
\begin{equation}
\qdisc_t^n((\xdisc,\ydisc), (\xdisc',\ydisc'))=
  \det
\begin{bmatrix}
A_\tdisc(\xdisc,\xdisc')& B_\tdisc(\xdisc,\ydisc')\\
C_\tdisc(\ydisc,\xdisc')& D_\tdisc(\ydisc,\ydisc')
\end{bmatrix}
\end{equation}
where 
\begin{itemize}
\item $A_t(\xdisc,\xdisc') $ is an $(n+1)\times(n+1)$-matrix 
where element $(i,j)$ is $\phidisc^{(t)}(\xdisc_i'-\xdisc_j)$,
\item $B_t(\xdisc,\ydisc') $ is an $(n+1)\times(n)$-matrix 
where element $(i,j)$ is 
$\Delta^{-1}\phidisc^{(t)}(\ydisc_i'-\xdisc_j)-\mathbf{1}\{j\geq i\}$,
\item $C_t(\ydisc,\xdisc') $ is an $n\times(n+1)$-matrix 
where element $(i,j)$ is $\Delta \phidisc^{(t)}(\ydisc_i'-\xdisc_j)$ and
\item $D_t(\ydisc,\ydisc') $ is an $n\times n$-matrix 
where element $(i,j)$ is $\phidisc^{(t)}(\ydisc_i'-\ydisc_j)$.
\end{itemize}

Let $\Wdisc^{n}= \{x:x_1<x_2<\dots<x_n\}\subset\mathbb{Z}^n$ and
for $x\in \Wdisc^{n}$ let 
\begin{equation}
h_n(x)=\prod_{i<j}(x_j-x_i).
\end{equation}
Finally, after all this notation, we can state a result.

\begin{thm}
\label{thm:transition}
The transition probabilities of 
$(X^k, X^{k+1})$ from the process $\Xdisc$ above 
are 
\begin{equation}
\label{eq:11}
\qdisc_t^{k,+}((\xdisc,\ydisc), (\xdisc',\ydisc')):=
\frac{h_k(y')}{h_k(y)} \qdisc_t^{k}((\xdisc,\ydisc), (\xdisc',\ydisc'))
\end{equation}
that is
\begin{equation}
\mathbb{P}[(X^{k+1}(s+t), X^{k}(s+t))=(x',y') ; (X^{k+1}(s), X^{k}(s))=(x,y)  ]
= \qdisc_t^{k,+}((\xdisc,\ydisc), (\xdisc',\ydisc')).
\end{equation}
\end{thm}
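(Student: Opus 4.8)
The plan is to prove two things: that the pair $(X^{k+1}(t), X^k(t))_{t}$ is a time-homogeneous Markov chain on $\Wdisc^{k+1,k}$, and that its $t$-step transition kernel is exactly $\qdisc_t^{k,+}$. For the second I would verify the base case $t=1$ by direct comparison with the dynamics~(\ref{eq:1}) and then propagate to all $t$ by a Chapman--Kolmogorov (semigroup) argument. It is worth noting from the outset that the prefactor $h_k(\ydisc')/h_k(\ydisc)$ is a Doob $h$-transform by the Vandermonde $h_k$, which is harmonic for non-colliding walks (Karlin--McGregor); this signals that the lower line $X^k$, read in isolation, evolves as $k$ random walks conditioned never to collide, while the block determinant $\qdisc_t^k$ encodes the coupled evolution of the upper line $X^{k+1}$, which is pushed and blocked by $X^k$ so as to preserve the interlacing~(\ref{eq:4}).

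For the base case I would evaluate $\qdisc_1^{k,+}$ explicitly. Since the single free-step kernel is $\phidisc=\phidisc^{(1)}$ with $\phidisc(0)=\phidisc(1)=\tfrac12$ (a fair $\beta$-coin), the entries of $A_1, B_1, C_1, D_1$ take only a few values, and I would expand the determinant and match it term by term against the one-step law prescribed by~(\ref{eq:1}): each unconstrained coin toss supplies a factor $\tfrac12$, while the blocking indicator $-\mathbf{1}\{\dots=X^{j-1}_i+1\}$ and the pushing indicator $+\mathbf{1}\{\dots=X^{j-1}_{i-1}\}$ are reproduced by the difference operators $\Delta,\bar\Delta$ and the discrete-integral corrections (including the $-\mathbf{1}\{j\geq i\}$ term) in the off-diagonal blocks $B_1,C_1$. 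The restriction of the determinant to $\Wdisc^{k+1,k}$ is exactly the interlacing~(\ref{eq:4}).

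The inductive step is the semigroup identity
\begin{equation}
\sum_{(\xdisc'',\ydisc'')\in\Wdisc^{k+1,k}} \qdisc_s^{k,+}((\xdisc,\ydisc),(\xdisc'',\ydisc''))\,\qdisc_t^{k,+}((\xdisc'',\ydisc''),(\xdisc',\ydisc')) = \qdisc_{s+t}^{k,+}((\xdisc,\ydisc),(\xdisc',\ydisc')).
\end{equation}
The intermediate Vandermonde factors $h_k(\ydisc'')$ telescope, so this reduces to the analogous composition law for the block determinants $\qdisc^k$. I would establish it from the convolution semigroup $\phidisc^{(s)}*\phidisc^{(t)}=\phidisc^{(s+t)}$, the fact that $\Delta^{\pm1}$ and $\bar\Delta^{\pm1}$ commute with convolution by $\phidisc$, and an Andr\'eief/Cauchy--Binet manipulation that integrates out $(\xdisc'',\ydisc'')$ and collapses the product of the two two-by-two block determinants into a single block determinant of the composed kernels. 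Together with the $t=1$ base case this identifies $\qdisc_t^{k,+}$ as the $t$-fold iterate of the one-step kernel.

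The two places where real work is needed are the following. First, the pair $(X^{k+1}, X^k)$ is not obviously Markov, since the update of $X^k$ in~(\ref{eq:1}) cascades down through $X^{k-1}, X^{k-2}, \dots$; I would close this gap with an intertwining (Diaconis--Fill / Markov-functions) argument, showing that the conditional law of the hidden lower lines given $(X^{k+1}, X^k)$ is a fixed uniform-on-interlacing kernel preserved by the dynamics, so that the lower lines integrate out and the pair is Markov with the claimed kernel. Second, and most delicate, is the constrained summation in the Chapman--Kolmogorov step: the sum runs over the interlacing chamber $\Wdisc^{k+1,k}$ rather than over all of $\mathbb{Z}^{k+1}\times\mathbb{Z}^k$, and it is precisely the partial-summation operators $\Delta^{-1}, \bar\Delta^{-1}$ together with the $-\mathbf{1}\{j\geq i\}$ corrections that are engineered so that the boundary contributions telescope and the constrained sum agrees with the unconstrained Cauchy--Binet evaluation. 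Verifying that these boundary terms cancel is where the bulk of the computation lies.
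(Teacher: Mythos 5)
Your overall architecture overlaps substantially with the paper's: the step you call a Diaconis--Fill/Markov-functions argument is exactly the paper's engine, namely the intertwining of Lemma~\ref{thm:qn_to_pn_lemma} (with $\lambda^k(x,\cdot)$ the uniform law on interlacing configurations) used in Theorem~\ref{thm:qn_to_pn}, together with the packed initial condition $(\bar x^k,\bar x^{k+1})$ --- for which $\Wdisc^k(\bar x^{k+1})$ is a single point --- to run an induction over the levels $k$ in section~\ref{sec:interl-from-aztec}. Where you diverge is in how the determinantal kernel is identified, and there your plan has one concrete error and one concrete hole. The error is the base case: $\qdisc_1^{k,+}$, \emph{including} the prefactor $h_k(y')/h_k(y)$, cannot be matched ``term by term'' against~(\ref{eq:1}) with ``each unconstrained coin toss supplying a factor $\tfrac12$''. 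In~(\ref{eq:1}) the lower line $X^k$ is itself pushed and blocked by the hidden line $X^{k-1}$, and only after integrating that out does its marginal one-step law become the $h$-transformed kernel, whose probabilities are genuinely different from coin products: two conditioned walkers at $(0,1)$ move to $(0,2)$ with probability $\tfrac12$, not $\tfrac14$. What a direct expansion can match against free coins plus push/block is the \emph{unconditioned} kernel $\qdisc_1^{k}$ of the auxiliary two-line process~(\ref{eq:2}), in which the $Y$-line moves freely and is merely killed at collisions; the $h$-factor must then be produced separately by the hidden-line integration. (Your matching works verbatim only for the pair $(X^1,X^2)$, where there are no hidden lines and $h_1\equiv 1$.) The paper keeps these two steps strictly apart for exactly this reason.

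The hole is your Chapman--Kolmogorov step. You correctly reduce, by telescoping the Vandermonde factors, to a composition law for $\qdisc^{k}$ summed over the chamber $\Wdisc^{k+1,k}$, but Andr\'eief/Cauchy--Binet does not apply off the shelf: the intermediate sum is constrained to the chamber, and the entries $\Delta^{-1}\phidisc^{(t)}(y_i'-x_j)-\mathbf{1}\{j\geq i\}$ break the antisymmetry one would need to unconstrain it, so the boundary cancellations you defer are not a routine verification but the entire difficulty. The paper never proves this identity at all. Instead, in section~\ref{sec:interl-rand-walks} it shows that the determinant formula $F$ and the killed semigroup $G$ satisfy the same master equation~(\ref{eq:grecursion}) with the same boundary conditions --- $F=0$ when $y_i=y_{i+1}$ and the discrete Neumann conditions $\bar\Delta_{x_i}F=0$ at the interlacing walls $x_i=y_i$ and $x_{i+1}=y_i$, all of which the determinant satisfies because two rows coincide --- and the same initial data at $t=0$ (Lemma~\ref{thm:qzero}, which is cleaner than a $t=1$ expansion of a $(2k+1)\times(2k+1)$ determinant). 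Uniqueness of the solution of this boundary-value recursion identifies $F\equiv G$ for all $t$ at once, after which the semigroup property holds automatically because $\qdisc^k_t$ is the transition kernel of an actual Markov process. If you wish to keep your route, you must actually supply the chamber-constrained Chapman--Kolmogorov identity; otherwise the efficient repair is to replace base-case-plus-semigroup by this master-equation-with-boundary-conditions argument.
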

A proof is given in section~\ref{sec:interl-from-aztec} 
and the reason I defined
$\qdisc_t^{k}$ as opposed to defining $\qdisc_t^{k,+}$ directly
will become obvious in the next section.

Given the exact expressions above it is 
a very straightforward computation to 
 integrate out the $x$ component in 
expression~(\ref{eq:11}). We  find that 
the transition probabilities of 
$(X^k)$ from the process $\Xdisc$ above 
is  
\begin{equation}
\label{eq:15}
\pdisc_t^{k,+}(\ydisc, \ydisc'):=
\frac{h_k(y')}{h_k(y)} \pdisc_t^{k}(\ydisc, \ydisc')
\end{equation}
where $\pdisc_t^k(y,y'):=D_t(y, y')$ given above. 
We recognise this as the transition probability
for random walks conditioned never to intersect, a fact
that is so important we state it properly. 
\begin{cor}
\label{thm:discrete_dyson_bm}
The component $X^k(t)$ of $\Xdisc(t)$ is a discrete Dyson Brownian motion
of $k$ particles started at $\bar x^k$.
\end{cor}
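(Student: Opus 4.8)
The plan is to derive Corollary~\ref{thm:discrete_dyson_bm} from Theorem~\ref{thm:transition} by marginalizing over the $x$-component, exactly as the surrounding text indicates. First I would observe that the marginal law of $(X^k(t))$ is obtained from the joint transition kernel $\qdisc_t^{k,+}$ by summing over all admissible $x'$ in the Gelfand--Tsetlin cone $\Wdisc^{k+1,k}$. Concretely, I would fix $y,y'\in\Wdisc^k$ and compute
\begin{equation*}
\pdisc_t^{k,+}(\ydisc,\ydisc')=\sum_{\xdisc'}\qdisc_t^{k,+}((\xdisc,\ydisc),(\xdisc',\ydisc')),
\end{equation*}
where the sum ranges over all $x'$ with $(x',y')\in\Wdisc^{k+1,k}$, and likewise the initial $x$ is summed out against the appropriate interlacing constraints. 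The claim of the text is that this collapses to the clean form~(\ref{eq:15}), namely $\pdisc_t^{k,+}(\ydisc,\ydisc')=\frac{h_k(y')}{h_k(y)}D_t(y,y')$ with $D_t$ the $k\times k$ matrix of entries $\phidisc^{(t)}(\ydisc_i'-\ydisc_j)$.

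The computational heart is the determinantal identity. Writing $\qdisc_t^k$ as the determinant of the block matrix $\left[\begin{smallmatrix}A&B\\C&D\end{smallmatrix}\right]$, I would perform a cofactor-style expansion and sum the $x'$-dependence, which lives in the columns of $A$ (through $\phidisc^{(t)}(\xdisc_i'-\xdisc_j)$) and in the rows of $C$ (through $\Delta\phidisc^{(t)}(\ydisc_i'-\xdisc_j)$). Summing a column of $A$-type entries over the interlacing range telescopes via the identity $\Delta^{-1}\Delta=\mathrm{id}$, since $A$ carries $\phidisc^{(t)}$ while $B$ carries $\Delta^{-1}\phidisc^{(t)}$ and $C$ carries $\Delta\phidisc^{(t)}$; the $\mathbf{1}\{j\geq i\}$ correction in $B$ is precisely what is needed so that these boundary terms combine correctly. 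After summation, the $A$ and $C$ blocks should drop out (the $x'$-rows/columns collapse to produce vanishing or constant contributions), leaving only the $D$ block, so that $\sum_{x'}\qdisc_t^k=\det D_t(y,y')=\pdisc_t^k(y,y')$. Multiplying by the prefactor $h_k(y')/h_k(y)$ then yields~(\ref{eq:15}).

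To finish the corollary I would identify $\frac{h_k(y')}{h_k(y)}\det\big(\phidisc^{(t)}(\ydisc_i'-\ydisc_j)\big)_{i,j}$ as the Karlin--McGregor transition kernel for $k$ independent lazy $\pm$-walks conditioned never to collide (a Doob $h$-transform by the Vandermonde $h_k$). Here $\phidisc^{(t)}$ is the $t$-step transition function of the unbiased walk that stays or steps forward, so $\det\big(\phidisc^{(t)}(\ydisc_i'-\ydisc_j)\big)$ is exactly the Lindstr\"om--Gessel--Viennot count of non-intersecting paths, and the $h_k$-conjugation is the standard Doob transform turning this signed kernel into the honest transition probability of the conditioned process; this is the meaning of ``discrete Dyson Brownian motion.'' The initial condition $X^k(0)=\bar x^k$ with $\bar x^k_i=i$ supplies the densely-packed starting configuration. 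I expect the main obstacle to be the bookkeeping in the determinant expansion, ensuring that the $\mathbf{1}\{j\geq i\}$ terms in $B$ and the boundary contributions from summing $A$ and $C$ over the Gelfand--Tsetlin cone cancel exactly rather than leaving residual terms; tracking the interlacing endpoints carefully so that the telescoping from $\Delta$ and $\Delta^{-1}$ closes up is the delicate step, whereas recognizing the final expression as a conditioned walk is then immediate.
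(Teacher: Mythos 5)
Your proposal follows exactly the paper's own route: the paper obtains the corollary by integrating out the $x'$-variables from the kernel $\qdisc_t^{k,+}$ of Theorem~\ref{thm:transition} (the ``very straightforward computation'' yielding~(\ref{eq:15}), spelled out again in section~\ref{sec:interl-rand-walks} where the telescoping of $\phidisc^{(t)}$, $\Delta^{-1}\phidisc^{(t)}$, $\Delta\phidisc^{(t)}$ and the $\mathbf{1}\{j\geq i\}$ corrections collapse the determinant to $\det D_t$), and then recognising $\frac{h_k(y')}{h_k(y)}\det\bigl[\phidisc^{(t)}(y_j'-y_i)\bigr]$ as the $h$-transformed Lindstr\"om--Gessel--Viennot kernel of walks conditioned never to intersect. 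Your sketch of the column summation and the Karlin--McGregor/Doob identification matches this in substance (modulo inheriting the paper's own row/column index sloppiness in the $B$ and $C$ blocks), so the proposal is correct and essentially identical to the paper's argument.
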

This fits nicely with theorem~\ref{thm:dyson_bm}. 
The component $X^k$ from $\Xdisc$ simply $k$ simple symmetric 
random walks conditioned never to intersect, their limit is 
$k$ Brownian motions conditioned never
to intersect, which is exactly what $X^k$ from Warren's process $\Xcont$ is.

\section{Interlacing Brownian motions}
\label{sec:warren}
\renewcommand{\tdisc}{\mathpzc{t}}
We will now digress a bit and summarise Warren's work 
in~\cite{warren:dyson_brownian_motions},  so as
to see the similarities between his continuous process
and our discrete process.  
The reader
is referred to that reference for more details of the construction. 
Consider  an $\mathbb{R}^{n+1}\times\mathbb{R}^{n}$-valued
stochastic process $(\Qcont(t))_{t\geq 0}=(X(t),Y(t))_{t\geq 0}$
satisfying an interlacing condition 
\begin{equation}
X_1(t)\leq Y_1(t)\leq X_2(t)\leq \dots \leq Y_n(t)\leq X_{n+1}(t),
\end{equation} 
and equations
\begin{align}
Y_i(t)&=y_i + \beta_i(t\wedge\tau),\\
X_i(t)&=y_i + \gamma_i(t\wedge\tau)+L_i^-(t\wedge\tau)- L_i^+(t\wedge\tau)
\end{align}
where

$(\beta_i)_{i=1}^n$ and $ (\gamma_i)_{i=1}^{n+1}$
are independent Brownian motions, 

$\tau=\inf\{t\geq 0: Y_i(t)=Y_{i+1}\text{ for some $i$}\}$,

$L^-_1\equiv L^+_{n+1}\equiv0$ and
\begin{align}
L_i^+(t)&=\int_0^t \mathbf{1}(X_i(s)=Y_i(s)) \, dL_i^+(s) &
L_i^-(t)&=\int_0^t \mathbf{1}(X_{i}(s)=Y_{i-1}(s)) \, dL_i^-(s)
\end{align}
are twice the semimartingale local times at zero of $X_i-Y_i$
and $X_i-Y_{i-1}$ respectively.

This process can be constructed by first constructing the Brownian
motions $\beta_i$ and $\gamma_i$ and then using Skorokhod's construction
to push $X_i$ up from $Y_{i-1}$ and down from $Y_i$. The process
is killed when $\tau$ is reached, i.e. when two of the $Y_i$ meet.

Warren then goes on to show that the transition densities
of this process have a determinantal form similar to 
what we have seen in the previous section. 
Let $\phicont_t(\xcont)=(2\pi t)^{-1/2} e^{-x^2/2t}$
and $\Phi_t(\xcont) = \int_{-\infty}^x \phicont_t(\ycont)\,d\ycont$.
Let 
$\Wcont^{n,n+1}=\{(x,y)\in\mathbb{R}^n\times \mathbb{R}^{n+1}:
x_1<y_1<x_2<\dots<y_n<x_{n+1}\}$. 

Define $\qcont_t^n((\xcont,\ycont),(\xcont', \ycont'))$ 
for $(\xcont,\ycont)$, $(\xcont',\ycont')\in \Wcont^{n,n+1}$
and $t>0$ to be the determinant of the matrix
\begin{equation}
\begin{bmatrix}
A_\tcont(\xcont,\xcont')& B_\tdisc(\xcont,\ycont')\\
C_\tcont(\ycont,\xcont')& D_\tdisc(\ycont,\ycont')
\end{bmatrix}
\end{equation}
where 

 $A_t(\xcont,\xcont') $ is an $(n+1)\times(n+1)$-matrix 
where element $(i,j)$ is $\phicont_{ t }(\xcont_i'-\xcont_j)$,

 $B_t(\xcont,\ycont') $ is an $(n+1)\times(n)$-matrix 
where element $(i,j)$ is 
$\Phi_{ t }(\ycont_i'-\xcont_j)-\mathbf{1}(j\geq i)$,

$C_t(\ycont,\xcont') $ is an $n\times(n+1)$-matrix 
where element $(i,j)$ is $\phicont_{ t}'(\ycont_i'-\xcont_j)$ and

 $D_t(\ycont,\ycont') $ is an $n\times n$-matrix 
where element $(i,j)$ is $\phicont_t(\ycont_i'-\ycont_j)$.

\begin{prop}[Prop 2 in \cite{warren:dyson_brownian_motions}]
The process $(X,Y)$ killed at time $\tau$ has transition densities 
$\qcont^n_t$, that is 
\begin{equation}
\qcont_t^n((\xcont,\ycont), (\xcont, \ycont))
\, d\xcont' d\ycont'=
\mathbb{P}^{x,y}[X_t\in d\xcont',Y_t\in d\xcont'; t<\tau]
\end{equation}
\end{prop}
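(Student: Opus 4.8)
The plan is to \emph{characterise} $\qcont^n_t$ as the unique transition density of the killed, reflected diffusion $(\Qcont(t))=(X(t),Y(t))$ by verifying that, viewed as a function of the forward variables $(\xcont',\ycont')\in\Wcont^{n,n+1}$ and of $t>0$, it solves the Kolmogorov forward (Fokker--Planck) problem attached to the process, with the correct boundary behaviour on each face of $\Wcont^{n,n+1}$ and the correct initial data. Uniqueness for that problem then forces $\qcont^n_t$ to coincide with the sub-probability law of $(X_t,Y_t)$ on $\{t<\tau\}$, which is the assertion.

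First I would treat the interior. In the open cell $\{\xcont_1'<\ycont_1'<\dots<\ycont_n'<\xcont_{n+1}'\}$ the $2n+1$ coordinates of $(X,Y)$ evolve as independent Brownian motions, so the forward equation is the heat equation $\partial_t u=\tfrac12\Delta_{(\xcont',\ycont')}u$. Each entry of the defining matrix is, in its single forward variable, either $\phicont_t$, its antiderivative $\Phi_t$, its derivative $\phicont_t'$, or an additive constant, and all of these satisfy the one-dimensional heat equation, using $\partial_x\Phi_t=\phicont_t$, $\partial_x\phicont_t=\phicont_t'$ together with $\partial_t\phicont_t=\tfrac12\phicont_t''$. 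Since each forward variable governs exactly one line of the matrix, multilinearity of the determinant and a cofactor expansion give $\partial_t\qcont^n_t=\tfrac12\Delta_{(\xcont',\ycont')}\qcont^n_t$ term by term; this step is routine.

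Next come the boundaries, which carry the content of the statement. On the face $\{\ycont_i'=\ycont_{i+1}'\}$, where two $Y$-particles collide and the process is killed at $\tau$, two lines of the matrix coincide, so $\qcont^n_t$ vanishes there: this is exactly the Dirichlet (absorbing) condition encoding $t<\tau$. The delicate faces are the interlacing walls $\{\xcont_i'=\ycont_i'\}$ and $\{\xcont_{i+1}'=\ycont_i'\}$, where the Skorokhod reflection driven by the local times $L_i^+,L_i^-$ acts. Writing $u=\ycont_i'-\xcont_i'\ge0$ at the first wall, the difference $Y_i-X_i$ is a reflected Brownian motion at $0$, so the forward equation inherits the Neumann condition $\partial_u\qcont^n_t=0$, that is
\begin{equation}
(\partial_{\ycont_i'}-\partial_{\xcont_i'})\,\qcont^n_t=0\quad\text{on }\{\xcont_i'=\ycont_i'\},
\end{equation}
and symmetrically $(\partial_{\xcont_{i+1}'}-\partial_{\ycont_i'})\qcont^n_t=0$ on $\{\xcont_{i+1}'=\ycont_i'\}$. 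I expect this to be \emph{the main obstacle}: because the reflecting walls are themselves diffusing, the conditions are oblique in the joint $(X,Y)$ coordinates, and one must check that the precise choice of $\Phi_t$ in one cross-block and $\phicont_t'$ in the conjugate cross-block is exactly what makes the two differentiated lines agree on the wall, so that the combination above cancels. The relations $\partial_x\Phi_t=\phicont_t$ and $\partial_x\phicont_t=\phicont_t'$ are what tie adjacent blocks together, and the verification is most cleanly organised by a cofactor expansion along the two affected lines.

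Finally I would check the initial condition $\qcont^n_t\to\delta_{(\xcont,\ycont)}$ as $t\to0^+$ on $\Wcont^{n,n+1}$: the diagonal blocks produce products of heat kernels collapsing to $\prod\delta$, while in the cross-block $\Phi_t(\ycont_i'-\xcont_j)\to\mathbf 1(\ycont_i'>\xcont_j)$, and the integer corrections $-\mathbf 1(j\ge i)$ are tuned precisely to neutralise these Heaviside limits off the diagonal. With interior equation, boundary conditions, and initial data in hand, well-posedness of the martingale problem for the reflected-and-killed diffusion (equivalently, uniqueness of the Feynman--Kac solution on $\Wcont^{n,n+1}$ with mixed Dirichlet/Neumann data) identifies $\qcont^n_t$ with the transition density. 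As an alternative, more probabilistic route one can condition on the whole path of $Y$: the $Y_i$ are independent Brownian motions whose surviving law has the Karlin--McGregor density $\det[\phicont_t(\ycont_i'-\ycont_j)]$, the $D$-block, while given that path the $X_i$ are independent Brownian motions reflected in the corridors $[Y_{i-1},Y_i]$; assembling the image-sum densities of these reflected motions and integrating against the Karlin--McGregor weight should collapse to the same determinant, the cancellations being the manifestation of Warren's intertwining between the $X$- and $Y$-dynamics.
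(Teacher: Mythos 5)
Your overall skeleton (evolution equation plus boundary identities plus initial data plus a uniqueness principle) is the right shape --- note that this paper does not actually prove the proposition but quotes it from Warren, and its own proof of the discrete analogue in section~\ref{sec:interl-rand-walks} has exactly that architecture. The genuine gap is at the step you yourself flag as the crux: the wall conditions, which you have written down incorrectly, in the wrong variables. The determinant does \emph{not} satisfy $(\partial_{\ycont_i'}-\partial_{\xcont_i'})\qcont^n_t=0$ on $\{\xcont_i'=\ycont_i'\}$. Already for $n=1$ (a $3\times3$ determinant) one checks directly: differentiating the $\ycont_1'$-column gives, on the wall, an exact copy of the $\xcont_1'$-column, so $\partial_{\ycont_1'}\qcont^1_t=0$ there; but $\partial_{\xcont_1'}\qcont^1_t$ produces entries $\phicont_t',\phicont_t''$ matching no other column and is generically nonzero, so your symmetric combination does not vanish. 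The identity the formula actually satisfies --- and the one that encodes the dynamics, since the local time pushes only $X_i$ while $Y$ is autonomous --- is one-directional and lives in the \emph{backward} variables: $\partial_{\xcont_i}\qcont^n_t=0$ on $\{\xcont_i=\ycont_i\}$ and on $\{\xcont_i=\ycont_{i-1}\}$, because differentiating the $\xcont_i$-row turns it into minus the corresponding $\ycont$-row. This is precisely the continuous counterpart of the conditions $\bar\Delta_{\xdisc_i}F=0$ used in the paper's discrete proof. Your heuristic ``$Y_i-X_i$ is reflected Brownian motion, hence Neumann in $u$'' is exactly where this goes wrong: the reflection is oblique to the face, and the forward (adjoint) condition is not the naive Neumann one you guessed.

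Two further steps fail in your forward formulation. First, the claimed Dirichlet face: at $\ycont_i'=\ycont_{i+1}'$ the two relevant \emph{columns} do not coincide, because they differ by the $-\mathbf{1}(j\ge i)$ corrections (the clean two-equal-lines cancellation happens in the backward $\ycont$-rows, which carry no indicators); moreover, since interlacing forces $\ycont_i'\le\xcont_{i+1}'\le\ycont_{i+1}'$, the set $\{\ycont_i'=\ycont_{i+1}'\}$ is a codimension-two corner of $\overline{\Wcont^{n,n+1}}$ (a $Y$-collision pinches $X_{i+1}$), not a boundary face, so the mixed Dirichlet/Neumann Fokker--Planck problem you invoke is not of standard type, and your appeal to ``well-posedness of the martingale problem'' for this obliquely reflected, killed diffusion in a nonsmooth cone is a substantial unproven step. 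The standard way out --- Warren's, mirrored by the discrete argument here, where uniqueness is free because the master recursion determines $G(t+1,\cdot)$ from $G(t,\cdot)$ --- is to work backward: apply It\^o's formula to $s\mapsto \qcont^n_{t-s}((X_s,Y_s),(\xcont',\ycont'))$ up to $t\wedge\tau$; the local-time terms vanish by the backward Neumann identities, the value at $\tau$ vanishes because $Y_i=Y_{i+1}$ makes two rows equal (even at the pinched corner), and taking expectations yields the proposition with no abstract uniqueness theorem at all. Finally, your fallback probabilistic route does not close either: there is no closed-form image expansion for Brownian motion reflected between two \emph{moving} barriers, so ``assembling the image-sum densities'' given the $Y$-path is not an available computation.
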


Warren goes on to condition the $Y_i$ not
to intersect via so called the Doob $h$\nobreakdash-transform. 
The transition densities for the transformed process 
are given in terms of the those for the killed process by
\begin{equation}
\qcont^{n,+}_t((\xcont,\ycont),(\xcont', \ycont'))=
\frac{h_n(y')}{h_n(y)}
\qcont^n_t ((\xcont,\ycont),(\xcont', \ycont')).
\end{equation}
%

He also shows that you can start all the $X_i$ and $Y_i$ 
of the transformed process at the origin by giving a so called
entrance law,
\begin{equation}
\label{eq:3}
\nu_t^n(x,y):=\frac{n!}{Z_{n+1}}t^{-(n+1)^2/2}
\exp\left\{-\sum_i x_i^2/(2t)\right\}
\left\{ \prod_{i<j}(x_j-x_i)\right\}
\left\{ \prod_{i<j}(y_j-y_i)\right\},
\end{equation}
that is,  showing 
(lemma~4 of~\cite{warren:dyson_brownian_motions}) 
that this expression satisfies 
\begin{equation}
\nu_{t+s}^n(x',y')= 
\int_{\Wcont^{n, n+1}}
\nu_s^n(x, y) 
\qcont_t^{n,+} ((x,y),(x',y')) \, dxdy.
\end{equation}

It is possible to integrate out  the $X$ components in 
that transition density and entrance law.
The result is transition density  
\begin{equation}
p^{n,+}_t(y,y'): = \frac{h(y')}{h(y)}\det D_t(y, y')
\end{equation}
and entrance law
\begin{equation}
\label{eq:14}
\mu^n_t(y):= \frac{1}{Z_n}t^{-n^2/2}
\exp\left\{-\sum_i y_i^2/(2t)\right\}
\left\{ \prod_{i<j}(y_j-y_i)\right\}^2.
\end{equation}

Now comes the interesting part.
Let $\mathbf{K}$ be the cone of points 
$x=(x^1, \dots, x^n)$ where 
$x^k=(x^k_1,\dots, x^k_k)\in\mathbb{R}^k$.
Warren defines a process $\Xcont(t)$ taking values in $\mathbf{K}$
such that 
\begin{equation}
X^k_i(t)=x^k_i + \gamma^k_i(t) + L_i^{k,-}(t) -L_i^{k,+}(t)
\end{equation}
where the $(\gamma_i^k)_{i,k}$ are independent Brownian motions 
and $L_i^{k,+ }$ and $L_i^{k,+ }$ 
are continuous, increasing processes growing 
only when $X^k_i(t)=X^{k-1}_i(t)$ and $X^k_i(t)=X^{k}_{i-1}(t)$ 
respectively and the special cases
$L^{k,+}_k$ and $L^{k,-}_1$ are
identically zero for all $k$.

Think of this as essentially $n(n+1)/2$ particles performing 
independent  Brownian motions except that the 
$k$ particles in $X^{k}$  can push the particles in
$X^{k+1}$ up or down to enforce the interlacing condition 
that the whole process should stay in $\mathbf{K}$.

This full process process can be constructed inductively as follows.
\begin{enumerate}
\item The process $(X^k)$ has transition densities $p^{k,+}_t$ and entrance
law $\mu^k_t$.
\item
The process $(X^k,X^{k+1})$ has transition densities $q^{k,+}_t$ and entrance
law $\nu^k_t$.
\item
For $k=2, \dots, n-1 $ the process $(X^{k+1})$ is conditionally 
independent of $(X^1,\dots, X^{k-1})$  given $(X^k)$. 
\item
This implies (by some explicit calculations)
 that $(X^{k+1})$ has transition densities $p^{k+1,+}_t$ and entrance
law $\mu^{k+1}_t$.
\end{enumerate}
This argument shows that the following.

\begin{prop}[Warren]
\label{thm:xfer}
 There exists such a process
$\Xcont(t)$ started at the origin and it satisfies that  
for $k=1$, \dots, $n-1$, 
the process $(X^k, X^{k+1})$ has
entrance law $\nu_{t}^{n}$ and transition probabilities $\qcont^{k,+}$.
\end{prop}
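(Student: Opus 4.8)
\emph{Overall strategy.} The assertion is really a statement about the inductive construction sketched just above, so the plan is to build $\Xcont$ one row at a time and to verify at each stage that the newly created pair carries the advertised law. The backbone of the induction is the stronger claim that, for every $k$, the single component $X^k$ is \emph{marginally} a Dyson Brownian motion of $k$ particles from the origin, i.e.\ has transition densities $\pcont^{k,+}_t$ and entrance law $\mu^k_t$. Granting this at level $k$, the pair law for $(X^k,X^{k+1})$ is supplied by Warren's killed process together with the $h$\nobreakdash-transform and the entrance-law consistency (Lemma~4 of \cite{warren:dyson_brownian_motions}), so the entire content is to propagate the single-row claim from $k$ to $k+1$. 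The base case $k=1$ is trivial, $X^1$ being a single Brownian motion with the heat kernel as $\pcont^{1,+}$ and Gaussian entrance law $\mu^1_t$.

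\emph{The inductive step.} Suppose $X^k$ has been realised as a $k$\nobreakdash-particle Dyson Brownian motion from the origin. I would construct $X^{k+1}$ by taking $k+1$ fresh Brownian motions $\gamma^{k+1}_1,\dots,\gamma^{k+1}_{k+1}$, independent of everything used so far, and running the Skorokhod reflection that keeps $X^{k+1}$ interlaced with $X^k$, pushing each outer particle up when it meets the inner particle below it and down when it meets the inner particle above it, so that the pair stays inside $\mathbf{K}$. Conditionally on the path of $X^k$ this is exactly Warren's reflected outer construction, so the \emph{killed} pair has transition kernel $\qcont^k_t$ by Proposition~2; conditioning the inner particles never to collide, which is automatic once $X^k$ is already the Dyson motion, turns this into the $h$\nobreakdash-transformed kernel $\qcont^{k,+}_t$ with entrance law $\nu^k_t$. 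Since $X^{k+1}$ is a measurable function of $X^k$ and of the fresh noise $\gamma^{k+1}$ alone, it is conditionally independent of $(X^1,\dots,X^{k-1})$ given $X^k$; this is item~(3) and is what ensures that appending the new row disturbs none of the earlier pair laws.

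\emph{The main obstacle.} What remains, and where the real work lies, is to close the induction by showing that the outer marginal $X^{k+1}$ is again a Dyson Brownian motion, now of $k+1$ particles. This is \emph{not} the integration that produces $\pcont^{k,+}$ from $\qcont^{k,+}$ (that one eliminates the outer component and returns the inner $k$\nobreakdash-particle law); here one must integrate out the \emph{inner} $k$ particles instead, and a coordinate projection of a Markov process need not be Markov. The plan is to verify a Markov-function (Rogers--Pitman) intertwining. Let $\Lambda$ send a chamber point $x$ to the probability measure on the interlacing fibre above it with density proportional to $h_k(y)\,\mathbf{1}\{x_1<y_1<\dots<y_k<x_{k+1}\}$, and check the two determinantal identities
\begin{equation}
\nu^k_t(x,y)=\mu^{k+1}_t(x)\,\Lambda(x,y),
\qquad
\Lambda\,\qcont^{k,+}_t=\pcont^{k+1,+}_t\,\Lambda,
\end{equation}
the first a Selberg-type integral of a Vandermonde over the interlacing simplex and the second the genuine intertwining of the two determinantal semigroups. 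These are the ``explicit calculations'' of item~(4): once both hold, the criterion of Rogers and Pitman identifies $X^{k+1}$ as a Markov process with transition densities $\pcont^{k+1,+}_t$ and entrance law $\mu^{k+1}_t$, which is precisely the induction hypothesis at level $k+1$. Existence of $\Xcont$ follows from the construction itself, and the proposition then follows by induction on $k$.
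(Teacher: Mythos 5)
Your proposal is correct and follows essentially the same route as the paper, which itself only records Warren's four-step induction: pair law via the killed reflected process plus $h$-transform, conditional independence of $X^{k+1}$ from $(X^1,\dots,X^{k-1})$ given $X^k$, and then the marginal identification of $X^{k+1}$. Your Rogers--Pitman intertwining with the kernel proportional to $h_k(y)$ on the interlacing fibre, together with the identity $\nu^k_t(x,y)=\mu^{k+1}_t(x)\Lambda(x,y)$, is precisely the content of the paper's ``explicit calculations'' in item~(4) --- it is the continuous counterpart of the paper's Lemma~\ref{thm:qn_to_pn_lemma} and Theorem~\ref{thm:qn_to_pn}, where the same intertwining is applied repeatedly to finite-dimensional distributions.
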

It is this process $\Xcont$ that is 
the continuous analog of our discrete process $\Xdisc$.

\section{Shuffling algorithm}
\label{sec:shuffling-algorithm}

We will now show how relate some well known 
facts about sampling random tilings of an Aztec diamond
before showing how to get the particle dynamics in section
\ref{sec:particle-process}. 

The Aztec diamond of order $n$, denoted $A_n$, 
 is an area  in the plane 
that is the union of those lattice squares
 $[a, a+1]\times [b, b+1]\subset\mathbb{R}$ that 
are entirely contained in $\{|x|+|y|\leq n+1\}$.  
$A_n$ can be tiled in $2^{n(n+1)/2}$ ways by dominoes 
of size $2\times 1$.
We will be interested picking a random tiling. 
By random tiling in this article we will always mean 
that all possible tilings given the same 
probability.  

A key ingredient of almost all results concerning 
tilings of this shape is the realization that 
one can distinguish four kinds of dominoes present 
in a typical tiling.
The obvious distinction to the casual observer
is the difference between  horizontal
and vertical dominoes. These can be subdivided further.
Colour the underlying lattice squares black and white 
according to a checkerboard fashion in such a way that 
the left square on the top line is black. 
Let a horizontal domino be of type N or north 
if its leftmost  square is black, and of type S or south
otherwise. Likewise let a vertical domino 
be of type W or west if its  topmost square is black and
type E or east otherwise. In figures~\ref{fig:shuffle} and~\ref{fig:particles} 
the S and E type dominoes have been shaded for convenience.

One way of sampling from this measure 
is the so called shuffling algorithm, 
first described in~\cite{elkies:alternating_sign_matrices_II}, 
and very nicely explained and generalised in~\cite{propp:generalized_domino}.
It is an iterative procedure that 
given a random tiling of $A_n$
and some number of coin-tosses, 
produces a random tiling of a diamond of $A_{n+1}$.
You start with the empty tiling on $A_0$ and
you repeat this process until you have a tiling of the desired size.
It is a theorem that this procedure gives 
all tilings equal probability, provided that the coin-tosses
we have made along the way are fair.  

The algorithm works in three stages. Start with a 
tiling $A_n$.
\begin{description}
\item[Destruction]
All $2\times2$ blocks consisting of an  S-domino 
directly above an N-domino are removed. Likewise 
all $2\times2$ blocks of 
consisting of an E-domino directly to left of a W-domino 
are removed. 

\item[Shuffling]
All N, S, E and W-dominoes respectively  move one unit length
up, down, right and left respectively. 

\item[Creation]
The result is a tiling of a subset of $A_{n+1}$.
The empty parts can be covered in a unique
way  by 2$\times$2 squares.
Toss a coin to fill these with two 
horizontal or two vertical dominoes with 
equal probability.
\end{description}

Figure~\ref{fig:shuffle} illustrates the process. In the leftmost column 
there are tilings of successively larger diamonds.
From column one to column two, the destruction step is carried out.
From there to the third column, shuffling is performed. These figures
contain several dots which will concern us later in this
exposition. 
The creation step of the algorithm applied to a diamond in the last column
gives (with positive probability)
 the diamond in the first column on the next row.
\begin{figure}[htbp]
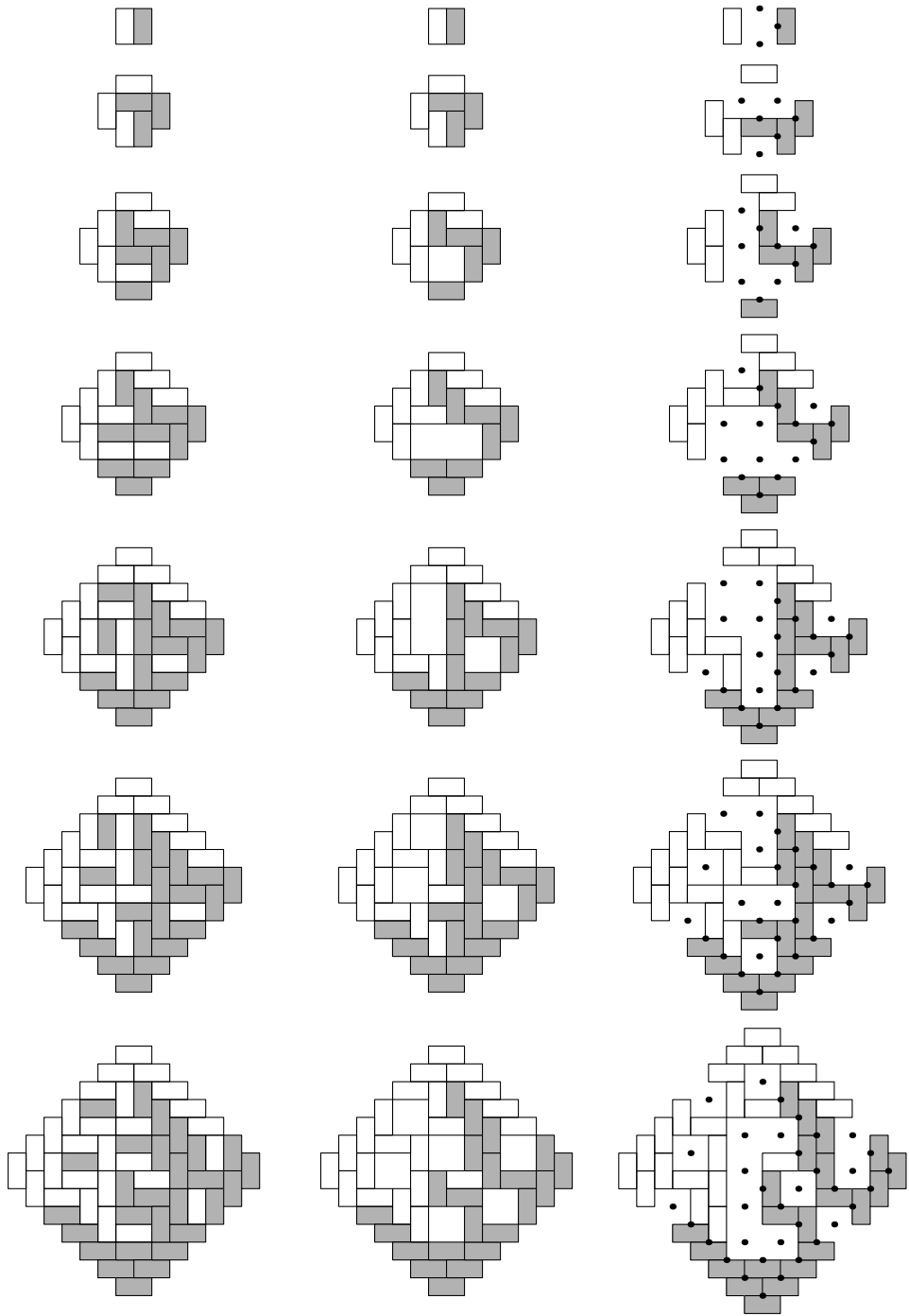

\noindent
\begin{tabular}{ m{4cm} m{4cm} m{4cm} r }
\center{\includegraphics{pictures/aztec.0.mps}}
&
\center{\includegraphics{pictures/aztec.1.mps}}
&
\center{\includegraphics{pictures/aztec.2.mps}} &\\
\center{\includegraphics{pictures/aztec.3.mps}}
&
\center{\includegraphics{pictures/aztec.4.mps}}
&
\center{\includegraphics{pictures/aztec.5.mps}} &\\
\center{\includegraphics{pictures/aztec.6.mps}}
&
\center{\includegraphics{pictures/aztec.7.mps}}
&
\center{\includegraphics{pictures/aztec.8.mps}} &\\
\center{\includegraphics{pictures/aztec.9.mps}}
&
\center{\includegraphics{pictures/aztec.10.mps}}
&
\center{\includegraphics{pictures/aztec.11.mps}} &\\
\center{\includegraphics{pictures/aztec.12.mps}}
&
\center{\includegraphics{pictures/aztec.13.mps}}
&
\center{\includegraphics{pictures/aztec.14.mps}} &\\
\center{\includegraphics{pictures/aztec.15.mps}}
&
\center{\includegraphics{pictures/aztec.16.mps}}
&
\center{\includegraphics{pictures/aztec.17.mps}} &\\
\center{\includegraphics{pictures/aztec.18.mps}}
&
\center{\includegraphics{pictures/aztec.19.mps}}
&
\center{\includegraphics{pictures/aztec.20.mps}} 
\end{tabular}
\caption{The shuffling procedure. S- and E-type dominoes are shaded.}
\label{fig:shuffle}
\end{figure}

To study more detailed properties 
of random tilings it is useful to introduce a coordinate system
suited to the setting and 
a particle process such that the possible tilings 
correspond to  particle configurations.

In the left picture in 
figure~\ref{fig:particles}, the S and E type dominoes are shaded and
a coordinate system is imposed on the tiling. 
For each tile there is exactly one of the $x$ lines and exactly one of
the $y$ lines that passes through its interior. 
Indeed we can uniquely specify the location
of a tile by giving its coordinates $(x,y)$ and type (N, S, E or W). 
You can see that along the line $y=k$ there are exactly $k$
shaded tiles, for $y=1\ldots 8$ where $8$ is the order of the diamond. 
The obvious generalisation of that statement 
is true for tilings of $A_n$ for any $n$. 
We shall call the occurrence of a shaded tile a particle.
The right picture in figure~\ref{fig:particles}
is the same tiling but with dots marking the particles.

Just to fix some notation, let $\px^j_i$ be the $x$-coordinate 
of the $i$:th particle along the line $y=j$.
It is clear from the definitions that these satisfy an interlacing criterion,
\begin{equation}
\px^{j}_i \leq \px^{j-1}_i \leq \px^{j}_{i+1}. 
\end{equation}
We will now see how the shuffling algorithm described above
acts on these particles. 

It turns out 
that the positions of the particles is uniquely determined before
the creation stage of the last iteration of the shuffling algorithm,
and we have marked these with dots in the last column
in figure~\ref{fig:shuffle}.
As can be seen in that figure, 
running the shuffling algorithm to produce tilings of  successively larger
Aztec diamonds imposes certain dynamics on these particles. 
That is the central object of study in this article.

Let us first consider the trajectory of $\px^1_1$.
As can easily be seen in figure~\ref{fig:shuffle},
on the $y=1$ line there are  always a number of W-dominoes,
then the particle, then a number of N-dominoes. 
Depending on whether the creation stage of the algorithm 
fills the empty space in between these with a pair of
horizontal or vertical dominoes, either the 
particle stays or its $x$-coordinate will increase
by one in the next step.
Thus the first particle
performs the simple random walk
\begin{equation}
\px^1_1(t)=\px^1_1(t-1)+\gamma_1^1(t).
\end{equation}
were $\gamma_j^i(t)$ are independent 
coin tosses, i.e. $P[\gamma^j_i(t)=1]=P[\gamma^j_i(t)=0]=\frac{1}{2}$, 
for $t, j=1,\ldots$ and $0\leq i\leq j$. 

Consider now the particles on row $y=2$. For $x^2_1$, 
while $\px^2_1(t)<\px^1_1(t)$ it performs a random walk independently
of  $\px^1_1$, at each time either staying or adding one with 
equal probability. 
However, when there is equality, $\px^2_1(t)=\px^1_1(t)$, 
then the particle must be represented by a vertical (S) tile.
Thus it does not contribute to growth of the west polar region,
thus the particle will remain fixed. In order to represent this as
a formula, we subtract one if the particle attempts to jump
past $x^1_1$. 
\begin{equation}
\px^2_1(t)=\px^2_1(t-1)+\gamma^2_1(t) 
- \mathbf{1}\{\px^2_1(t-1)+\gamma^2_1(t)= \px^1_1(t-1)+1\}
\end{equation}
Symmetry completes our analysis of this row with the
relation
\begin{equation}
\px^2_2(t)=\px^2_2(t-1)+\gamma^2_2(t)  + \mathbf{1}\{\px^2_2(t-1)+\gamma^2_2(t)= \px^1_1(t-1)\}.
\end{equation}

For the third row, our previous analysis applies to 
the first and last particle.
\begin{align}
\px^3_1(t)&=\px^3_1(t-1)+\gamma^3_1(t) - \mathbf{1}\{\px^3_1(t-1)+\gamma^3_1(t)= \px^2_1(t-1)+1\} \\
\px^3_3(t)&=\px^3_3(t-1)+\gamma^3_3(t) + \mathbf{1}\{\px^3_3(t-1)+\gamma^3_3(t)= \px^2_2(t-1)\}
\end{align}

On $y=3$ between $\px^2_1$ and $\px^2_2$ there must
be first a sequence of zero or more  E dominoes, then $\px^3_2$, 
then a sequence of 
zero or more N dominoes. 
While $\px^3_2$ is in the interior of this area it performs the 
customary random walk. 
It must interact with $\px^2_1$ and $\px^2_2$  in the 
same way as we have seen other particles interacting above.

So 
\begin{equation}
\begin{aligned}
\px^3_2(t)=\px^3_2(t-1)+\gamma^3_2(t)
&- \mathbf{1}\{\px^3_2(t-1)+\gamma^3_2(t)= \px^2_2(t-1)+1\}\\
&+ \mathbf{1}\{\px^3_2(t-1)+\gamma^3_2(t)= \px^2_1(t-1)\}.
\end{aligned}
\end{equation}

The same pattern repeats itself evermore.
\begin{align}
\px^j_1(t)&=\px^j_1(t-1)+\gamma^j_1(t)
- \mathbf{1}\{\px^j_1(t-1)+\gamma^j_1(t)= \px^{j-1}_1(t-1)+1\}\\
\px^j_j(t)&=\px^j_j(t-1)+\gamma^j_j(t)
+ \mathbf{1}\{\px^j_j(t-1)+\gamma^j_j(t)= \px^{j-1}_{j-1}(t-1)\}\\
\px^j_i(t)&=\px^j_i(t-1)+\gamma^j_i(t)
- \mathbf{1}\{\px^j_i(t-1)+\gamma^j_i(t)= \px^{j-1}_{j}(t-1)+1\}\\
&\quad\quad\quad\quad\quad\quad\quad\quad\ \;
+ \mathbf{1}\{\px^j_i(t-1)+\gamma^j_i(t)= \px^{j-1}_{j-1}(t-1)\}.
\end{align}
with initial conditions
$x^j_i(j)=i$
for $j=2,\ldots$ and $1\leq i\leq j$.

\begin{figure}[htbp]
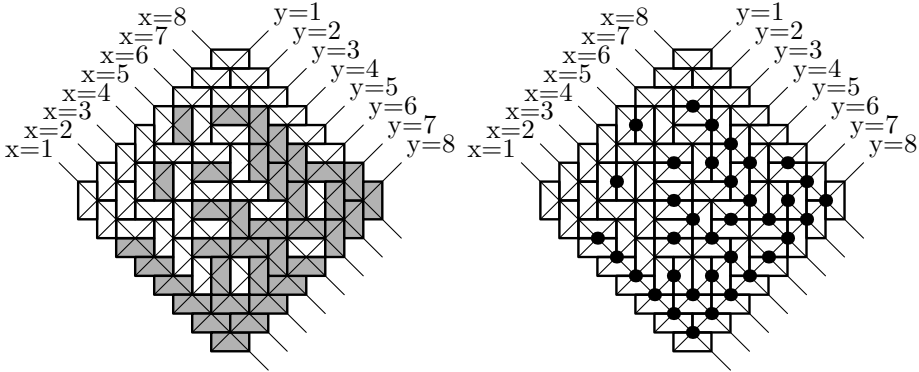

\noindent\includegraphics{pictures/aztec.21.mps}
\includegraphics{pictures/aztec.22.mps}
\caption{Same diamond }
\label{fig:particles}
\end{figure}

In order to analyse this situation it is suitable to perform a 
change of variables, 
\begin{equation}
X_i^j(t)=\px_i^j(t-j), 
\end{equation}
which gives the equations given in section~\ref{sec:particle-process}.

\section{Transition probabilities on two lines}
\label{sec:interl-rand-walks}
In order to analyse the dynamics just described
we follow Warren's example and first consider
just two lines at a time. 
What we do in this section is very similar to section~2 
of~\cite{warren:dyson_brownian_motions}.

Consider the 
$\Wdisc^{n+1,n}$-valued process 
process $(\Qdisc^n(t)) = (X(t),Y(t))$ with 
components $X_1(t), \dots, X_{n+1}(t)$ and $Y_1(t), \dots, Y_{n}(t)$,
satisfying the equations 
\begin{equation}
\label{eq:2}
\begin{aligned}
Y_i(t+1)&=Y_i(t)+\beta_i(t)\\
X_1(t+1)&=X_1(t)+\alpha_1(t) 
- \mathbf{1}\{ X_1(t)+\alpha_1(t)=Y_1(t+1)+1\}\\
X_i(t+1)&=X_i(t)+\alpha_i(t) + 
\mathbf{1}\{
X_i(t)+\alpha_i(t)=Y_{i-1}(t+1)\}\\
&\quad- \mathbf{1}\{X_i(t)+\alpha_i(t)=Y_{i}(t+1)+1\}\\
X_{n+1} (t+1)&=X_i(t)+\alpha_{n+1}(t) + 
\mathbf{1}\{
X_{n+1}(t)+\alpha_{n+1}(t)=Y_{n}(t+1)\}
\end{aligned}
\end{equation}
where  $\alpha_i(t)$ and $\beta_i(t)$ are i.i.d.
coin tosses, 
s.t. $\mathbb{P}[\alpha_i(t)=0]=\mathbb{P}[\alpha_i(t)=1]=\frac{1}{2}$. 
They evolve until the stopping time 
$\tau=\min\left\{
t: Y_i(t)=Y_{i+1}(t)\text{ for some }i\in \{1,\dots,n-1\}\right\}$.
At the time $\tau$ the process is  killed
and remains constant for all time after that. 
This is a very simple dynamics, each $Y_i$ either 
stays or increases one independently of all others. The $X_i$ do
the same but are sometimes pushed up or down by a $Y_{i-1}$
or $Y_{i}$ respectively so as to stay in the cone $\Wdisc^{n,n+1}$.
This is the discrete analog of the process
$\Qcont$ defined in section~\ref{sec:warren} of this paper.

\begin{lemma}
\label{thm:qzero}
For any $f:\Wdisc^{n+1,n}\rightarrow \mathbb{R}$,
\begin{equation}
\sum_{(\xdisc', \ydisc')\in \Wdisc^{n+1,n}} 
\qdisc_0^n((\xdisc, \ydisc), (\xdisc', \ydisc'))f((\xdisc', \ydisc')) 
= f((\xdisc, \ydisc)).
\end{equation}
\end{lemma}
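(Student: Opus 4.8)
The plan is to show that, evaluated at $t=0$, the kernel $\qdisc_0^n$ is literally the Kronecker delta on $\Wdisc^{n+1,n}$, i.e. $\qdisc_0^n((\xdisc,\ydisc),(\xdisc',\ydisc'))=\mathbf{1}\{(\xdisc',\ydisc')=(\xdisc,\ydisc)\}$; the claimed summation identity is then immediate. First I would substitute $t=0$ into every block using $\phidisc^{(0)}=\delta_0$. This collapses the three building blocks to indicators: $\phidisc^{(0)}(z)=\mathbf{1}\{z=0\}$, $\Delta^{-1}\phidisc^{(0)}(z)=\mathbf{1}\{z\geq 0\}$ and $\Delta\phidisc^{(0)}(z)=\mathbf{1}\{z=0\}-\mathbf{1}\{z=1\}$. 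Consequently the diagonal blocks $A_0$ and $D_0$ become the indicator matrices $[\mathbf{1}\{\xdisc_i'=\xdisc_j\}]$ and $[\mathbf{1}\{\ydisc_i'=\ydisc_j\}]$, while $B_0$ becomes the Heaviside-type matrix $[\mathbf{1}\{\ydisc_i'\geq\xdisc_j\}-\mathbf{1}\{j\geq i\}]$ and $C_0$ the difference $[\mathbf{1}\{\ydisc_i'=\xdisc_j\}-\mathbf{1}\{\ydisc_i'=\xdisc_j+1\}]$. Denote by $M_0$ the resulting block matrix whose determinant is $\qdisc_0^n$.

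On the diagonal $(\xdisc',\ydisc')=(\xdisc,\ydisc)$, the strict ordering forced by membership in $\Wdisc^{n+1,n}$ makes the $\xdisc_i$ (and the $\ydisc_i$) pairwise distinct, so $A_0=I_{n+1}$ and $D_0=I_n$. I would then evaluate the determinant through the Schur complement, $\det M_0=\det A_0\cdot\det(D_0-C_0A_0^{-1}B_0)=\det(I_n-C_0B_0)$, and verify by a direct finite computation that this equals $1$; I expect $I_n-C_0B_0$ to reduce to a unitriangular matrix once one records which indicators fire under the interlacing $\xdisc_i\leq\ydisc_i<\xdisc_{i+1}$. The role of the correction term $-\mathbf{1}\{j\geq i\}$ built into $B_0$ is precisely to cancel the boundary contributions that would otherwise spoil this, so that the diagonal value comes out to $1$.

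The substantive part is the off-diagonal vanishing: for $(\xdisc',\ydisc')\neq(\xdisc,\ydisc)$ in $\Wdisc^{n+1,n}$ I must show $\det M_0=0$. The approach I would take is an induction on $n$ organised by peeling off the outermost interlacing pair $(\xdisc_{n+1}',\ydisc_n')$ via a cofactor expansion, which removes one $x$-row/column and one $y$-row/column and reduces the claim to the $\Wdisc^{n,n-1}$ statement; the indicator and difference structure together with the strict interlacing should force the outermost destination coordinate to coincide with its source before any inner coordinate can contribute, producing a vanishing row or column otherwise. I expect this off-diagonal vanishing to be the main obstacle, since it is where the combinatorics of the interlacing constraints and the two operators $\Delta^{-1}$ and $\Delta$ must be tracked simultaneously. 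An alternative that sidesteps the pointwise statement is to perform the summation over $(\xdisc',\ydisc')$ directly: sum out the $\xdisc'$ variables against the $\delta_0$ entries coming from $A_0$ and then resolve the remaining $\ydisc'$ sum by summation by parts, with the correction term once more supplying the cancellation of the boundary terms that makes the sum telescope to $f((\xdisc,\ydisc))$.
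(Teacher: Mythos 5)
Your reduction of the lemma to the pointwise statement $\qdisc_0^n((\xdisc,\ydisc),(\xdisc',\ydisc'))=\mathbf{1}\{(\xdisc',\ydisc')=(\xdisc,\ydisc)\}$ is the right target, and your diagonal evaluation is correct --- in fact simpler than you anticipate: on $\Wdisc^{n+1,n}$ the interlacing forces $\mathbf{1}\{\ydisc_j\geq \xdisc_i\}=\mathbf{1}\{j\geq i\}$ for all $i,j$, so at $(\xdisc',\ydisc')=(\xdisc,\ydisc)$ the whole block $B_0$ is the zero matrix and the determinant is $\det I_{n+1}\cdot\det I_n=1$, no Schur complement required. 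The genuine gap is exactly where you flag it: the off-diagonal vanishing, which is the entire content of the lemma, is left as an expectation (``I expect'', ``should force'') rather than an argument. Moreover the mechanism you predict for the induction --- peeling off the outermost pair and finding a vanishing row or column unless the outermost destination equals its source --- is not the general mechanism. Take $n=2$, $(\xdisc,\ydisc)=((0,1,2),(0,1))$ and $(\xdisc',\ydisc')=((0,2,3),(1,2))$, both in $\Wdisc^{3,2}$: in the interleaved $5\times5$ matrix the row indexed by $\xdisc_1$ and the row indexed by $\ydisc_1$ are both equal to $(1,0,0,0,0)$, because $\xdisc_1=\ydisc_1$ makes a $\phidisc^{(0)}$-entry and a $\Delta\phidisc^{(0)}$-entry fire identically. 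The determinant dies through coincidence of an $x$-row with a $y$-row, a degeneracy your row-by-row bookkeeping of which indicators fire does not produce, and which the sketched cofactor induction would have to detect and handle. Your fallback --- summing $\qdisc_0^n$ against $f$ and telescoping --- is not an escape either: the identity must hold for \emph{every} $f$, which is equivalent to the pointwise delta statement, so the same case analysis reappears.

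The paper's proof supplies the missing idea in one stroke, and it is worth recording since it eliminates all case analysis. Interleave the coordinates, $m=2n+1$, $\zdisc_1=\xdisc_1$, $\zdisc_2=\ydisc_1$, \dots, $\zdisc_m=\xdisc_{n+1}$, and start from the closed-form indicator-determinant identity of Warren (equation (42) of \cite{warren:dyson_brownian_motions}), which evaluates the $m\times m$ determinant with entries $\mathbf{1}\{\zdisc_i\leq\zdisc_j'\}$, corrected by a constant above the diagonal, as the product $\prod_{i=1}^m\mathbf{1}\{\zdisc_i\leq\zdisc_i'\}$. Now apply the operator $\Delta_{\zdisc_1'}(-\bar\Delta_{\zdisc_2})\Delta_{\zdisc_3'}\cdots(-\bar\Delta_{\zdisc_{m-1}})\Delta_{\zdisc_m'}$ to both sides, i.e.\ a $\Delta$ in each primed $x'$-variable and a $-\bar\Delta$ in each unprimed $y$-variable. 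Entrywise these operators annihilate the constant corrections where they act and convert the indicator entries into precisely your four blocks $A_0$, $B_0$, $C_0$, $D_0$ (the $B$-block receives no operator, which is why the $-\mathbf{1}\{j\geq i\}$ correction survives there and only there), while the factorised right-hand side becomes $\prod_{i=1}^m\mathbf{1}\{\zdisc_i=\zdisc_i'\}$. This establishes the Kronecker-delta property on and off the diagonal simultaneously, with no induction; the moral is to difference a known summed identity rather than to evaluate the $t=0$ determinant by hand. If you want a self-contained proof along your lines, the honest route is to prove Warren's identity (an easy induction on $m$) and then difference it, which is essentially reconstructing the paper's two-line argument.
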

\begin{proof}
Let $m=2n+1$ and $\zdisc_1=\xdisc_1$, $\zdisc_2=\ydisc_1$, 
\dots, $\zdisc_{m-1}=\ydisc_n$,
$\zdisc_{m}=\xdisc_{n+1}$.
Equation (42) in~\cite{warren:dyson_brownian_motions} states that
\begin{equation}
\det
\left\{
\begin{matrix}
\mathbf{1}\{\zdisc_i\leq \zdisc_j'\} & i\geq j\\
-\mathbf{1}\{\zdisc_i\leq \zdisc_j'\} & i<j
\end{matrix}
\right\}
=
\mathbf{1}
\{
\zdisc_1\leq \zdisc_1', 
\zdisc_2\leq \zdisc_2', \dots,
\zdisc_m\leq \zdisc_m' 
\}
\end{equation}
for $\zdisc, \zdisc'\in\Wdisc^n$.
Applying the operator 
$\Delta_{\zdisc_1'}(-\bar\Delta_{\zdisc_2})\Delta_{\zdisc_3'}\dots
(-\bar\Delta_{\zdisc_{m-1}})\Delta_{\zdisc_m'}$
to both sides of that equality turns the left hand side into 
$\qdisc_0^n((\xdisc, \ydisc),(\xdisc', \ydisc'))$ and the right
hand side into 
$\mathbf{1}
\{
\zdisc_1=\zdisc_1'$, 
$\zdisc_2= \zdisc_2'$, \dots,
$\zdisc_m= \zdisc_m' 
\}$.
\end{proof}
\begin{prop}
$\qdisc_t$, for  $t=0,1,\dots$, are the transition probabilities for the 
process $(X,Y)$, i.e. for $(\xdisc,\ydisc)$, $(\xdisc',\ydisc')\in W^{n+1,n}$,
\begin{equation}
\qdisc_t^n((\xdisc,\ydisc), (\xdisc',\ydisc')) = 
\mathbb{P}^{(\xdisc,\ydisc)}[X(t)=\xdisc', Y(t)=\ydisc';
 t <\tau]
\end{equation}
\end{prop}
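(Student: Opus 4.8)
The plan is to prove the identity by induction on $t$, with Lemma~\ref{thm:qzero} supplying the base case and a single Chapman--Kolmogorov step supplying the inductive step. Write $P_t((\xdisc,\ydisc),(\xdisc',\ydisc)) := \mathbb{P}^{(\xdisc,\ydisc)}[X(t)=\xdisc', Y(t)=\ydisc'; t<\tau]$ for the quantity to be computed, and let $T$ denote the one-step transition kernel of the killed chain read off directly from \eqref{eq:2}: from a state $(\xdisc'',\ydisc'')$ one tosses the coins $\alpha_i,\beta_i$, lets the $Y_i$ move and the $X_i$ move-and-be-pushed, and assigns mass to the resulting configuration only when it still lies in $\Wdisc^{n+1,n}$ (otherwise $\tau$ has been reached and that mass is discarded). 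The killed process then obeys $P_0=\mathrm{Id}$ together with the forward recursion
\[
P_{t+1}((\xdisc,\ydisc),(\xdisc',\ydisc'))=\sum_{(\xdisc'',\ydisc'')\in\Wdisc^{n+1,n}} P_t((\xdisc,\ydisc),(\xdisc'',\ydisc''))\, T((\xdisc'',\ydisc''),(\xdisc',\ydisc')),
\]
so it suffices to show that the kernels $\qdisc_t^n$ satisfy these same two relations.

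For the base case I would observe that Lemma~\ref{thm:qzero} is precisely the statement that $\qdisc_0^n$ acts as the identity operator on functions on $\Wdisc^{n+1,n}$, hence $\qdisc_0^n((\xdisc,\ydisc),(\xdisc',\ydisc'))=\mathbf{1}\{(\xdisc,\ydisc)=(\xdisc',\ydisc')\}$. Since any starting point in $\Wdisc^{n+1,n}$ satisfies $\ydisc_1<\ydisc_2<\dots<\ydisc_n$ (because $\ydisc_i<\xdisc_{i+1}\le\ydisc_{i+1}$), one has $\tau>0$, so $P_0=\qdisc_0^n$ and the base case holds.

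The inductive step rests on the one-step identity $\qdisc_{t+1}^n=\sum_{(\xdisc'',\ydisc'')\in\Wdisc^{n+1,n}}\qdisc_t^n(\cdot,(\xdisc'',\ydisc''))\,T((\xdisc'',\ydisc''),\cdot)$, equivalently the $s=1$ instance of the semigroup property $\qdisc_{s+t}^n=\qdisc_s^n\circ\qdisc_t^n$. Two ingredients enter. First, $\qdisc_1^n=T$, which I would verify by direct computation: evaluating the block determinant at $t=1$ with $\phidisc=\tfrac12(\delta_0+\delta_1)$ the one-step law of a single particle, the entries of $A,B,C,D$ reproduce exactly the stay/jump-and-push probabilities of \eqref{eq:2}. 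Second, and this is the main obstacle, the composition identity itself.

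The difficulty is that the composition is a \emph{constrained} sum over the interlacing set $\Wdisc^{n+1,n}$ of a product of two $(2n+1)\times(2n+1)$ block determinants whose off-diagonal blocks $B,C$ carry the operators $\Delta^{-1},\Delta$ together with the correction $-\mathbf{1}\{j\ge i\}$. I would attack it as in Lemma~\ref{thm:qzero}: pass to the $m=2n+1$ interlacing coordinates $\zdisc_1=\xdisc_1,\zdisc_2=\ydisc_1,\dots,\zdisc_m=\xdisc_{n+1}$, undo the difference operators $\Delta_{\zdisc'}$ and $-\bar\Delta_{\zdisc}$, and reduce the claim to the convolution semigroup property $\phidisc^{(t+1)}=\phidisc^{(t)}*\phidisc$ of the underlying one-dimensional walk, using a Lindström--Gessel--Viennot / Cauchy--Binet summation over the Weyl chamber to handle the interlacing constraint. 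The delicate point is to show that the \emph{local} pushing indicators of \eqref{eq:2} match the \emph{global} boundary data encoded by $\Delta,\bar\Delta$ and the $-\mathbf{1}\{j\ge i\}$ terms, and that killing at $\tau$ corresponds to discarding exactly the contributions that leave $\Wdisc^{n+1,n}$; this is the discrete counterpart of Warren's verification that his kernel solves the backward equation with reflecting boundary conditions. Once the composition identity is in hand, the two recursions coincide and the induction closes.
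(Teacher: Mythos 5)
Your induction scaffolding (base case from Lemma~\ref{thm:qzero}, then a one-step Chapman--Kolmogorov recursion) is sound in principle, and your base case coincides with the paper's. The genuine gap is the inductive step: the composition identity $\qdisc^n_{t+1}=\qdisc^n_t\circ T$ is where the entire content of the proposition lives, and your proposal does not prove it. The appeal to attacking it ``as in Lemma~\ref{thm:qzero}'' with a Lindstr\"om--Gessel--Viennot / Cauchy--Binet summation does not go through as stated: that lemma is a one-shot algebraic identity, obtained by hitting an identity of indicator determinants with difference operators, and it supplies no mechanism for summing a \emph{product of two block determinants} over the constrained region $\Wdisc^{n+1,n}$. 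If you try to exploit $\phidisc^{(t)}*\phidisc=\phidisc^{(t+1)}$ by summation by parts in the intermediate variables, the interlacing constraint $x_1''\le y_1''<x_2''\le\cdots$ generates boundary terms at every wall, and showing that these cancel against the pushing and killing corrections of~\eqref{eq:2} is precisely the hard step; you flag it yourself as ``the main obstacle'' and then assume it. Your route also requires the auxiliary verification $\qdisc_1^n=T$ for a $(2n+1)\times(2n+1)$ determinant, asserted but not carried out (and never needed in the paper's argument).

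For comparison, the paper closes exactly this step by conditioning on the \emph{first} step rather than composing on the last, which turns the problem into a backward recursion in the initial coordinates, where the determinant structure is usable because those coordinates index the rows. With a test function $f$ it sets $F(t,(x,y))=\sum\qdisc_t^n((x,y),\cdot)\,f$ and $G(t,(x,y))=\mathbb{E}^{(x,y)}[f(X_t,Y_t);\,t<\tau]$, shows that $G$ obeys the free master equation~\eqref{eq:6}, equivalently
\begin{equation*}
\bar\Delta_t G=\Bigl(\prod_{i=1}^{n+1}\bigl(1+\tfrac12\bar\Delta_{x_i}\bigr)\prod_{i=1}^{n}\bigl(1+\tfrac12\bar\Delta_{y_i}\bigr)-1\Bigr)G,
\end{equation*}
supplemented by the boundary identities (\ref{eq:7})--(\ref{eq:9}) that encode killing and pushing, and observes that recursion plus boundary data determine $G$ uniquely. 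It then verifies that $F$ satisfies the same recursion, by absorbing each factor $1+\tfrac12\bar\Delta$ into a row of the determinant via $\phidisc^{(t+1)}=(1+\tfrac12\bar\Delta)\phidisc^{(t)}$, and the same boundary conditions, because at each wall ($y_i=y_{i+1}$ directly; $x_i=y_i$ after applying $\bar\Delta_{x_i}$; $x_{i+1}=y_i$ after $\bar\Delta_{x_{i+1}}$) two rows of the determinant coincide. This is exactly the ``discrete counterpart of Warren's verification'' that you invoke in passing: to complete your proof you would have to carry it out, at which point your induction collapses into the paper's argument.
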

\begin{proof}

Take some test function $f:\Wdisc^{n+1,n}\rightarrow \mathbb{R}$.
Let 
\begin{equation}
F(t,(\xdisc,\ydisc)):= \sum_{(\xdisc', \ydisc')\in \Wdisc^{n+1,n}}
 \qdisc_t^n((\xdisc, \ydisc), (\xdisc', \ydisc'))f(\xdisc', \ydisc') 
\end{equation}
and 
\begin{equation}
\label{eq:5}
G(t,(\xdisc,\ydisc)):= \mathbb{E}^{(\xdisc,\ydisc)}[f(X_t,Y_t); t<\tau]
\end{equation}
We want of course to prove that $F$ and $G$ are
equal and we will do this by showing that 
they satisfy the same recursion equation 
with the same boundary values.  By 
lemma~\ref{thm:qzero} we already know 
that 
\begin{equation}
\label{eq:16}
F(0, \cdot) \equiv G(0, \cdot) \equiv f(\cdot).
\end{equation}

The master equation satisfied by $G$ is 
\begin{equation}
\label{eq:6}
G(t+1,(\xdisc,\ydisc))=
\frac{1}{2^{2n+1}}\sum_{a_i, b_i\in \{0,1\}}
G(t, \xdisc_1+a_1, \ydisc_1+b_1, \xdisc_2+a_2,\dots, 
\ydisc_{n}+b_n, \xdisc_{n+1}+a_{n+1}).
\end{equation}
This formula simply encodes the dynamics that each particle
either stays or jumps forward one step. 
This needs to be supplemented with some
boundary conditions that have to do with 
  the interactions between particles.

When two of the $\ydisc_i$-particles coincide, 
this corresponds to the event $t=\tau$, 
which does not contribute to the expectation in~(\ref{eq:5}).
Thus 
\begin{equation}\label{eq:7}
G(\tdisc, \dots, \ydisc_{i-1}=\zdisc, \xdisc_i, \ydisc_i=\zdisc,\dots):= 0.
\end{equation}
Also, the particle $\xdisc_i$ cannot jump past $\ydisc_i$,
\begin{equation}\label{eq:8}
G(\tdisc, \dots, \xdisc_i=\zdisc+1, \ydisc_i=\zdisc,\dots)
:= G(\tdisc,\dots, \xdisc_i=\zdisc, \ydisc_i=\zdisc,\dots) 
\end{equation}
and $\xdisc_{i+1}$ must not drop below $\ydisc_i+1$,
\begin{equation}\label{eq:9}
G(\tdisc, \dots, \ydisc_i=\zdisc, \xdisc_{i+1}=\zdisc,\dots)
:= G(\tdisc,\dots, \ydisc_i=\zdisc, \xdisc_{i+1}=\zdisc+1,\dots) .
\end{equation}

$G(t+1,\cdot)$ is uniquely determined from $G(t, \cdot)$ using the 
recursion equation and boundary values above. 
It follows that $G$ is uniquely 
defined by the recursion equation~(\ref{eq:6})
and the boundary conditions (\ref{eq:16},\ref{eq:7},\ref{eq:8},\ref{eq:9}).

Observe that, all functions $g:\mathbb{Z}\rightarrow\mathbb{R}$, 
satisfy
\begin{equation}\label{eq:10}
\frac{1}{2}(g(\xdisc)+g(\xdisc+1))=g(\xdisc)+\frac{1}{2}\bar\Delta g(\xdisc).
\end{equation}
Using this identity many times on~(\ref{eq:6}) shows that
\begin{equation}
G(t+1,(\xdisc,\ydisc))=(1+\frac{1}{2}\bar\Delta_{\xdisc_1})
(1+\frac{1}{2}\bar\Delta_{\ydisc_1})
(1+\frac{1}{2}\bar\Delta_{\xdisc_2})
\cdots (1+\frac{1}{2}\bar\Delta_{y_n})
(1+\frac{1}{2}\bar\Delta_{\xdisc_{n+1}}) 
G(t,\xdisc_1, \ydisc_1, \dots, \ydisc_n, \xdisc_{n+1})
\end{equation}
which can be rewritten as
\begin{equation}
\label{eq:grecursion}
\bar\Delta_t G(t, (\xdisc,\ydisc)) = \left(\prod_{i=1}^{n+1}(1+\frac{1}{2}\bar\Delta_{\xdisc_i})
\prod_{i=1}^n(1+\frac{1}{2}\bar\Delta_{\ydisc_i}) -1\right)G(t, (\xdisc,\ydisc)).
\end{equation}

The boundary conditions can be rewritten in this 
notation as well, equations  (\ref{eq:7},\ref{eq:8},\ref{eq:9})
can be rewritten to
\begin{align}
G(t, (\xdisc,\ydisc))&=0 &&\text{when $\ydisc_i=\ydisc_{i+1}$,}\\
\bar\Delta_{\xdisc_i} G(t, (\xdisc,\ydisc)) &= 0 &&
\text{when $\xdisc_i=\ydisc_{i}$ and}\\
\bar\Delta_{\xdisc_{i+1}} G(t, (\xdisc,\ydisc)) &= 0
 &&\text{when $\xdisc_{i+1}=\ydisc_{i}$.}
\end{align}

Now let us look at $F$. 
The observation~(\ref{eq:10}) gives that 
$\phidisc*\psi= (1+\frac{1}{2}\Delta) \psi$.
In particular, 
$\phidisc^{(n+1)}(\ydisc-\xdisc) = 
 (1+\frac{1}{2}\bar\Delta_\xdisc)\phidisc^{(n)}(\ydisc-\xdisc)$.
\begin{align*}
F(t+1, (\xdisc,\ydisc)) =&
\begin{bmatrix}
\phidisc^{(t+1)}(\xdisc_1'-\xdisc_1)  & 
\Delta^{-1}\phidisc^{(t+1)}(\ydisc_1'-\xdisc_1)-1 &  
\phidisc^{(t+1)}(\xdisc_2'-\xdisc_1) &\dots \\
\Delta \phidisc^{(t+1)}(\xdisc_1'-y_1)  & 
\phidisc^{(t+1)}(\ydisc_1'-\ydisc_1) &  
\Delta\phidisc^{(t+1)}(\xdisc_2'-\ydisc_1) &\dots \\
\phidisc^{(t+1)}(\xdisc_1'-\xdisc_2)  & \Delta^{-1}\phidisc^{(t+1)}(\ydisc_1'-\xdisc_2) &  \phidisc^{(t+1)}(\xdisc_2'-\xdisc_2) &\dots \\
 \vdots
\end{bmatrix}
\\
=&(1+\frac{1}{2}\bar \Delta_{\xdisc_1})\begin{bmatrix}
\phidisc^{(t)}(\xdisc_1'-\xdisc_1)  & \Delta^{-1}\phidisc^{(t)}(\ydisc_1'-\xdisc_1)-1 &  \phidisc^{(t)}(\xdisc_2'-\xdisc_1) &\dots \\
\Delta \phidisc^{(t+1)}(\xdisc_1'-\ydisc_1)  & \phidisc^{(t+1)}(\ydisc_1'-\ydisc_1) &  \Delta\phidisc^{(t+1)}(\xdisc_2'-\ydisc_1) &\dots \\
\phidisc^{(t+1)}(\xdisc_1'-\xdisc_2)  & \Delta^{-1}\phidisc^{(t+1)}(\ydisc_1'-\xdisc_2) &  \phidisc^{(t+1)}(\xdisc_2'-\xdisc_2) &\dots \\
 \vdots
\end{bmatrix}
\\
&\vdots \displaybreak[0]\\
= &\prod_{i=1}^{n+1}(1+\frac{1}{2}\bar\Delta_{\xdisc_i})
\prod_{i=1}^n(1+\frac{1}{2}\bar\Delta_{\ydisc_i}) F(t, (\xdisc,\ydisc))
\end{align*}
which shows that $F$ satisfies the
same recursion~(\ref{eq:grecursion}) as $G$. 
Now let us take a look at the boundary values. 

$\qdisc_t^n((\xdisc,\ydisc), (\xdisc', \ydisc'))$ is zero 
when $\ydisc_i=\ydisc_{i+1}$ because two of its rows are then equal. 
When $\ydisc_i=\xdisc_i$ for some $i$ then 
$\bar\Delta_{\xdisc_i}\qdisc_t^n((\xdisc,\ydisc), (\xdisc', \ydisc')) =0$
because two rows will be equal when you take the difference operator
into the determinant. The same  argument
shows that  $\bar\Delta_{\xdisc_{i+1}}\qdisc_t^n((\xdisc,\ydisc), (\xdisc', \ydisc')) =0$ 
when $\ydisc_i=\xdisc_{i+1}$.
Applying this knowledge to the sum $F$, shows that
\begin{align}
F(t, (\xdisc,\ydisc))&=0 &&\text{when $\ydisc_i=\ydisc_{i+1}$}\\
\bar\Delta_{\xdisc_i} F(t, (\xdisc,\ydisc)) &= 0 &&\text{when $\xdisc_i=\ydisc_{i}$}\\
\bar\Delta_{\xdisc_{i+1}} F(t, (\xdisc,\ydisc)) &= 0 && \text{when $\xdisc_{i+1}=\ydisc_{i}$}
\end{align}

Since $F$ and $G$ satisfy the same recursion equation with the
same boundary values, they must be equal.
\end{proof}

Again, following the example of Warren, we observe
that it is possible to condition the processes never to 
leave $\Wdisc^{n,n+1}$ via a so called Doob $h$\nobreakdash-transform. See 
for example~\cite{konig:non-colliding} for 
details about $h$\nobreakdash-transforms for discrete processes.
Let 
\begin{equation}
h_n(\xdisc)=\prod_{1\leq i< j\leq n} (\xdisc_j-\xdisc_i).
\end{equation}

The $h$\nobreakdash-transform of the process above has transition
probabilities
\begin{equation}
\qdisc_t^{n,+} ((\xdisc,\ydisc), (\xdisc', \ydisc'))= 
\frac{h(\ydisc')}{h(\ydisc)}
\qdisc_t^n((\xdisc,\ydisc), (\xdisc', \ydisc')).
\end{equation}
Just for the sake of notation, call the
transformed process $(\mathpzc{Q}^{n,+}(t))$. 

The idea now is to stitch together the process
$\Xdisc$ from processes $\mathpzc{Q}^{k,+}$
for $k=1$, \dots, $n-1$, just like Warren does in 
the continuous case.  For this we need to 
establish some auxiliary results about $\mathpzc{Q}^{n}$
and $\mathpzc{Q}^{n,+}$. 

One observation to make 
is that it is possible to integrate out the $\xdisc$ variables
from $q_t^{n,+}$. 
\begin{equation}
\pdisc_t^{n,+}(\ydisc, \ydisc'):= 
\int_{\xdisc_1'\leq \ydisc_1'<\dots\leq \ydisc_{n}'<\xdisc_{n+1}'}
\qdisc_t^{n,+}(x,y) \, d\xdisc' = 
\frac{h_n(\ydisc')}{h_n(\ydisc)}
\det[\phidisc^{(t)}(\ydisc_j'-\ydisc_i)]_{1\leq i, j\leq n}
\end{equation}
where $d\xdisc'$ is counting measure on 
$\Wdisc^{n+1}= \{\xdisc\in\mathbb{Z}^{n+1}: \xdisc_1<\dots<\xdisc_{n+1}\}$.  
The reader might recognise $\pdisc_t^{n,+}$, 
as a $h$-transformed version of
the transition probabilities 
from the Lindström-Gessel-Viennot theorem.  
Thus this can be seen as the transition probabilities for 
a process on $\Wdisc^n$, where
all $n$ particles perform independent random 
walks but are
conditioned never to intersect, i.e. never to 
leave $ \Wdisc^n$. We state this as a proposition.

Fix $n>0$ and let $\bar \xdisc=(1,\dots,n+1)\in \mathbb{Z}^{n+1}$ 
and $ \bar \ydisc=(1,\dots,n)\in \mathbb{Z}^{n}$. 

\begin{prop}
\label{thm:qn_to_y}
Consider the process $(\Qdisc^{n,+}(t))=(X(t), Y(t))$ 
 started in $(X(0), Y(0))=(\bar \xdisc, \bar \ydisc)$. The
process $(Y(t))$ is governed by $p^{n,+}$. 
\end{prop}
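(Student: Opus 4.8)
The plan is to upgrade the single-step integration identity recorded just above the proposition, namely $\sum_{\xdisc'}\qdisc_t^{n,+}((\xdisc,\ydisc),(\xdisc',\ydisc'))=\pdisc_t^{n,+}(\ydisc,\ydisc')$, into a statement about the entire trajectory $(Y(t))$. Stripping off the harmonic factor $h_n(\ydisc')/h_n(\ydisc)$, that same identity reads $\sum_{\xdisc'}\qdisc_t^{n}((\xdisc,\ydisc),(\xdisc',\ydisc'))=\pdisc_t^{n}(\ydisc,\ydisc')$, where $\pdisc_t^{n}(\ydisc,\ydisc'):=\det[\phidisc^{(t)}(\ydisc_j'-\ydisc_i)]_{1\le i,j\le n}$. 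The feature I would lean on throughout is that this right-hand side does not depend on the source coordinate $\xdisc$; this is the analytic shadow of the fact that in the killed dynamics~(\ref{eq:2}) the $Y$-coordinates evolve autonomously, $Y_i(t+1)=Y_i(t)+\beta_i(t)$, and the killing time $\tau$ is a functional of $Y$ alone.

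First I would fix times $0<t_1<\dots<t_k$ and target points $\ydisc^{(1)},\dots,\ydisc^{(k)}$, and, using the Markov property of $\qdisc^{n,+}$ for the process started at $(\bar\xdisc,\bar\ydisc)$, write the joint law of $\big((X,Y)(t_1),\dots,(X,Y)(t_k)\big)$ as a product of one-step kernels $\qdisc^{n,+}_{t_j-t_{j-1}}$. Summing over all the intermediate $X$-coordinates, the Doob factors telescope out: since each $h_n(\ydisc')/h_n(\ydisc)$ depends only on the $Y$-coordinates it pulls through every $\xdisc$-sum, and the product of factors collapses to the single ratio $h_n(\ydisc^{(k)})/h_n(\bar\ydisc)$. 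What is left is the sum over the intermediate $\xdisc$'s of a product of the bare kernels $\qdisc^{n}$, which I would evaluate one factor at a time starting from the latest time: each sum $\sum_{\xdisc}\qdisc^{n}$ collapses to a $\pdisc^{n}$ by the identity above, and because that $\pdisc^{n}$ carries no dependence on the variable just summed, the next sum is in turn free to collapse.

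Iterating this collapse produces $\prod_{j=1}^{k}\pdisc^{n}_{t_j-t_{j-1}}(\ydisc^{(j-1)},\ydisc^{(j)})$ with the convention $\ydisc^{(0)}=\bar\ydisc$. Re-distributing the telescoped factor $h_n(\ydisc^{(k)})/h_n(\bar\ydisc)$ back across the product restores one ratio $h_n(\ydisc^{(j)})/h_n(\ydisc^{(j-1)})$ to each term, converting every $\pdisc^{n}$ into the corresponding $\pdisc^{n,+}$. Thus the finite-dimensional distributions of $(Y(t))$ are exactly those of the Markov chain with transition kernel $\pdisc^{n,+}$ started at $\bar\ydisc$, which is the claim.

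The one genuinely load-bearing step, and the point I would be most careful to justify, is the interchange of the $\xdisc$-summations with the time-ordered product. This succeeds only because the conditioning function depends on $Y$ alone and because $\sum_{\xdisc'}\qdisc^{n}((\xdisc,\ydisc),(\xdisc',\ydisc'))=\pdisc^{n}(\ydisc,\ydisc')$ leaves no residual dependence on the source $\xdisc$; were $Y$ not autonomous the summed kernel would still see the next $X$ and the telescoping collapse would break down. I would also confirm that each intermediate $\xdisc$ is summed over precisely the values with $(\xdisc,\ydisc^{(j)})\in\Wdisc^{n+1,n}$, but since that is exactly the range on which the one-step integration was established, no new estimate is needed.
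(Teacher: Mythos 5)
Your proof is correct and follows essentially the same route as the paper, which states the proposition as an immediate consequence of the observation that integrating out the $x'$ variables from $\qdisc_t^{n,+}$ yields $\pdisc_t^{n,+}(\ydisc,\ydisc')$ with no residual dependence on the source $x$. Your telescoping of the Doob factors $h_n(\ydisc')/h_n(\ydisc)$ through the $x$-sums and the backward collapse of the finite-dimensional distributions simply makes explicit the marginalization argument the paper leaves implicit, correctly identifying the autonomy of the $Y$-dynamics as the reason it works.
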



Now for a technical lemma.
For $\xdisc\in \Wdisc^n$, 
let $\Wdisc^n(\xdisc)=\{y\in \mathbb{R}^n: \xdisc_1\leq y_1 < \dots \leq y_n<\xdisc_{n+1}\}\subset\mathbb{Z}^n$
and for $\ydisc\in \Wdisc^n(\xdisc)$
let 
\begin{equation}
\lambda^n(\xdisc,\ydisc)=n! \frac{h_n(\ydisc)}{h_{n+1}(\xdisc)}.
\end{equation}
It is not a difficult calculation to show that $\lambda^n(\xdisc, \cdot)$ is
a probability measure on $\Wdisc^{n}(\xdisc)$. 
Just rewrite $h_n(y)$ as a Vandermonde matrix and 
perform the summation over all $\ydisc$. 
\begin{lemma}
\label{thm:qn_to_pn_lemma}
\begin{equation}
\int_{W^n(\xdisc)}
\lambda^n(\xdisc,\ydisc)
q_t^{n,+}((\xdisc,\ydisc),(\xdisc',\ydisc')) \; d\ydisc=\pdisc_t^{n+1,+}(\xdisc,\xdisc') \lambda^n(\xdisc', \ydisc')
\end{equation}
where $d\ydisc$ is counting measure.
\end{lemma}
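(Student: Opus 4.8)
The plan is to substitute the definitions and watch the Vandermonde factors cancel, reducing the whole statement to a single determinantal identity. Writing $\lambda^n(\xdisc,\ydisc)=n!\,h_n(\ydisc)/h_{n+1}(\xdisc)$ and $\qdisc_t^{n,+}=\frac{h_n(\ydisc')}{h_n(\ydisc)}\qdisc_t^n$, the factor $h_n(\ydisc)$ cancels inside the summand; on the right $h_{n+1}(\xdisc')$ from $\pdisc_t^{n+1,+}(\xdisc,\xdisc')$ cancels against the denominator of $\lambda^n(\xdisc',\ydisc')$. Both sides then carry the common prefactor $n!\,h_n(\ydisc')/h_{n+1}(\xdisc)$, so it suffices to prove
\begin{equation}
\sum_{\ydisc\in\Wdisc^n(\xdisc)}\qdisc_t^n((\xdisc,\ydisc),(\xdisc',\ydisc'))
=\det\bigl[\phidisc^{(t)}(\xdisc'_j-\xdisc_i)\bigr]_{1\le i,j\le n+1}.
\end{equation}
The real content is that, after summing, the right-hand side must lose all dependence on $\ydisc'$.

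The first thing to exploit is that $\Wdisc^n(\xdisc)$ is a product of intervals: the interlacing $\xdisc_i\le\ydisc_i<\xdisc_{i+1}$ lets each $\ydisc_i$ range independently over $\{\xdisc_i,\dots,\xdisc_{i+1}-1\}$. I would write $\qdisc_t^n$ in its interlaced layout, with rows ordered $\xdisc_1,\ydisc_1,\xdisc_2,\dots,\xdisc_{n+1}$ and columns by the corresponding primed variables; then each $\ydisc_i$ sits only in the single row labelled $\ydisc_i$, whose entries are $\Delta\phidisc^{(t)}(\xdisc'_b-\ydisc_i)$ in the $\xdisc'$-columns and $\phidisc^{(t)}(\ydisc'_b-\ydisc_i)$ in the $\ydisc'$-columns. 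By multilinearity I push the independent sums over the $\ydisc_i$ into these rows, and each sum telescopes: summing $\Delta\phidisc^{(t)}(\xdisc'_b-\ydisc_i)$ over the interval gives $\phidisc^{(t)}(\xdisc'_b-\xdisc_i)-\phidisc^{(t)}(\xdisc'_b-\xdisc_{i+1})$, while summing $\phidisc^{(t)}(\ydisc'_b-\ydisc_i)$ gives $\Delta^{-1}\phidisc^{(t)}(\ydisc'_b-\xdisc_i)-\Delta^{-1}\phidisc^{(t)}(\ydisc'_b-\xdisc_{i+1})$.

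Comparing the resulting summed row $R_i$ with the rows $X_i,X_{i+1}$ carried by $\xdisc$, one finds $R_i=X_i-X_{i+1}$ in every $\xdisc'$-column; in the $\ydisc'$-columns the indicator $-\mathbf{1}\{b\ge a\}$ attached to the $B$-entries of rows $X_i,X_{i+1}$ leaves a residue $\mathbf{1}\{b\ge i\}-\mathbf{1}\{b\ge i+1\}=\mathbf{1}\{b=i\}$, so that $R_i=X_i-X_{i+1}+e_{\ydisc'_i}$, where $e_{\ydisc'_i}$ is the row that is $1$ in the $\ydisc'_i$-column and $0$ elsewhere. The row operation $R_i\mapsto R_i-X_i+X_{i+1}$, legitimate since the $X$-rows are untouched, turns each $\ydisc$-row into $e_{\ydisc'_i}$. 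A Laplace expansion along these $n$ rows then deletes exactly the $\ydisc$-rows and the $\ydisc'$-columns, which occupy matching interlaced positions so the overall sign is $+1$, leaving precisely the $\xdisc$--$\xdisc'$ block $\det[\phidisc^{(t)}(\xdisc'_j-\xdisc_i)]$. The $\ydisc'$-dependence has vanished, which is the identity wanted.

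I expect the bookkeeping of the indicator $-\mathbf{1}\{b\ge a\}$ to be the main obstacle. It is exactly this term that prevents the telescoped rows from being clean differences of $\xdisc$-rows and instead produces the basis rows $e_{\ydisc'_i}$ that drive the final collapse, so pinning down its placement and the attendant sign of the generalized Laplace expansion is where the care is needed; everything else is routine multilinearity and telescoping.
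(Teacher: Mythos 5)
Your proof is correct and is precisely the ``elementary calculation given the explicit formula for $\qdisc_t^{n,+}$'' that the paper's one-line proof alludes to: cancel the Vandermonde factors to reduce to $\sum_{\ydisc\in\Wdisc^n(\xdisc)}\qdisc_t^n = \det[\phidisc^{(t)}(\xdisc'_j-\xdisc_i)]$, push the independent sums over $\ydisc_i\in\{\xdisc_i,\dots,\xdisc_{i+1}-1\}$ into the $\ydisc$-rows by multilinearity, telescope, and collapse via row operations and a sign-$(+1)$ Laplace expansion. One remark: your indicator convention (the $-1$ present exactly when the $\ydisc'$-column index is at least the $\xdisc$-row index, so that the residue is $e_{\ydisc'_i}$ rather than $e_{\ydisc'_{i+1}}$, which would not even exist for $i=n$) is the correct one --- it matches the interlaced matrix displayed in the paper's proof of the two-line transition probabilities and Warren's original definition, whereas the paper's inline description of $B_t$ transposes the arguments without transposing the indicator.
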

\begin{proof} An elementary calculation given the explicit 
formula for $\qdisc_t^{n,+}$.
\end{proof}

\begin{thm}
\label{thm:qn_to_pn}
Consider the process $(\Qdisc^{n,+}(t))=(X(t), Y(t))$ 
 started in $(X(0), Y(0))=(\bar \xdisc, \bar \ydisc)$. The
process $(X(t))$ is governed by $p^{n+1,+}$. 
\end{thm}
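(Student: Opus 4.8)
The plan is to read Lemma~\ref{thm:qn_to_pn_lemma} as an intertwining (a Markov-functions relation in the sense of Rogers--Pitman) between the two-line kernel $\qdisc_t^{n,+}$ and the single-line kernel $\pdisc_t^{n+1,+}$, with the link supplied by $\lambda^n$. Concretely, I would introduce the Markov kernel $\Lambda$ from $\Wdisc^{n+1}$ to $\Wdisc^{n+1,n}$ sending $\xdisc$ to the pair $(\xdisc,Y)$ with $Y$ drawn from $\lambda^n(\xdisc,\cdot)$; this is a genuine probability kernel because $\lambda^n(\xdisc,\cdot)$ is a probability measure on $\Wdisc^n(\xdisc)$. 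The content of the lemma is then exactly $\Lambda \qdisc_t^{n,+} = \pdisc_t^{n+1,+}\Lambda$, where on the right $\pdisc_t^{n+1,+}$ moves the $\xdisc$-coordinate and $\Lambda$ reattaches a $\ydisc$-coordinate. Once this is in place, the Markov-functions principle yields that the $X$-marginal is itself Markov with transition kernel $\pdisc^{n+1,+}$, provided the process is started from the correct initial law.

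First I would verify the initial condition, which is the one genuinely model-specific check. For $\xdisc=\bar\xdisc=(1,\dots,n+1)$ the constraints defining $\Wdisc^n(\bar\xdisc)$ read $i\leq \ydisc_i<i+1$, so the only admissible point is $\bar\ydisc=(1,\dots,n)$ and $\lambda^n(\bar\xdisc,\cdot)$ is a point mass. A short Vandermonde computation, splitting off the factors involving the index $n+1$, gives $h_{n+1}(\bar\xdisc)=n!\,h_n(\bar\ydisc)$, whence $\lambda^n(\bar\xdisc,\bar\ydisc)=1$, i.e. $\lambda^n(\bar\xdisc,\cdot)=\delta_{\bar\ydisc}$. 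Therefore the deterministic start $(X(0),Y(0))=(\bar\xdisc,\bar\ydisc)$ is precisely the $\Lambda$-lift of $\delta_{\bar\xdisc}$, which is the hypothesis the argument needs.

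Rather than quoting an external theorem, I would run the induction directly, carrying the statement that conditionally on $\sigma(X(0),\dots,X(t))$ the law of $Y(t)$ is $\lambda^n(X(t),\cdot)$. The base case is the computation above. For the inductive step, write $\xdisc=X(t)$; the one-step joint law of $(X(t+1),Y(t+1))$ conditional on the $X$-history is $\sum_{\ydisc}\lambda^n(\xdisc,\ydisc)\,\qdisc_1^{n,+}((\xdisc,\ydisc),(\xdisc',\ydisc'))$, which by Lemma~\ref{thm:qn_to_pn_lemma} equals $\pdisc_1^{n+1,+}(\xdisc,\xdisc')\,\lambda^n(\xdisc',\ydisc')$. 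Summing out $\ydisc'$ and using that $\lambda^n(\xdisc',\cdot)$ is a probability measure shows that $X(t+1)$ depends on the history only through $\xdisc=X(t)$ and moves by $\pdisc_1^{n+1,+}$; dividing back through then reinstates the conditional law $\lambda^n(X(t+1),\cdot)$ for $Y(t+1)$, closing the induction. The first assertion is exactly the claim that $(X(t))$ is governed by $\pdisc^{n+1,+}$.

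The only real obstacle is conceptual rather than computational: one must check that conditioning on the \emph{full} $X$-history (not merely on $X(t)$) collapses to dependence on $X(t)$ alone, which is what makes the marginal genuinely Markov, and this is precisely where the intertwining and the propagated conditional law $\lambda^n(X(t),\cdot)$ are \emph{both} needed. The analytic input --- evaluating $\sum_{\ydisc}\lambda^n(\xdisc,\ydisc)\,\qdisc_t^{n,+}$ --- has already been isolated as Lemma~\ref{thm:qn_to_pn_lemma}, so beyond the initial-condition check the argument is bookkeeping.
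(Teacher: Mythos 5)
Your proposal is correct and is essentially the paper's own argument: both rest on Lemma~\ref{thm:qn_to_pn_lemma} read as the intertwining $\Lambda\,\qdisc^{n,+}_t=\pdisc^{n+1,+}_t\,\Lambda$ with link $\lambda^n$, together with the observation that $\Wdisc^n(\bar\xdisc)$ is a single point so that $\lambda^n(\bar\xdisc,\cdot)$ is the point mass at $\bar\ydisc$ and the deterministic start is the lift of $\delta_{\bar\xdisc}$. The only difference is packaging: the paper inserts $\lambda^n(\bar\xdisc,\ydisc_0)$ at time zero and telescopes it through the finite-dimensional distributions by repeated application of the lemma, summing out the final $\lambda^n(\xdisc_k,\cdot)$, whereas you propagate the conditional law $\lambda^n(X(t),\cdot)$ of $Y(t)$ given the $X$-history one step at a time --- the same mechanism in filtering form.
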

Our  proof of this is very similar to the proof of proposition~5 
in~\cite{warren:dyson_brownian_motions}.
\begin{proof}
Recall the the transition probabilities for $(\Qdisc^{n,+}(t))$
are $\qdisc_\tdisc^{n,+}$. 
\begin{multline*}
\mathbb{P}[X(t_1)\in A_1, \dots, X(t_k)\in A_k ] =\\
\int_{A_1} d\xdisc_1\cdots\int_{A_k}d\xdisc_k 
\int_{\Wdisc^n(\xdisc_1)}d\ydisc_1 \cdots\int_{\Wdisc^n(\xdisc_k)} d\ydisc_k
\qdisc^{n,+}_{t_1}((\bar \xdisc, \bar \ydisc),(\xdisc_1, \ydisc_1))
\qdisc^{n,+}_{t_2-t_1}(( \xdisc_1,  \ydisc_1),(\xdisc_2, \ydisc_2))
\cdots \\
\cdots
\qdisc^{n,+}_{t_k-t_{k-1}}
(( \xdisc_{k-1},  \ydisc_{k-1}),(\xdisc_k, \ydisc_k))=
\end{multline*}
$\Wdisc^n(\bar\xdisc)$ contains but one
element.
\begin{multline*}
\int_{A_1} d\xdisc_1\cdots\int_{A_k}d\xdisc_k 
\int_{\Wdisc^n(\bar\xdisc)}d\ydisc_0
\int_{\Wdisc^n(\xdisc_1)}d\ydisc_1 \cdots\int_{\Wdisc^n(\xdisc_k)} d\ydisc_k\\
\lambda^n(\bar\xdisc, \ydisc_0)
\qdisc^{n,+}_{t_1}((\bar \xdisc, \ydisc_0),(\xdisc_1, \ydisc_1))
\qdisc^{n,+}_{t_2-t_1}(( \xdisc_1,  \ydisc_1),(\xdisc_2, \ydisc_2))
\cdots \\
\cdots
\qdisc^{n,+}_{t_k-t_{k-1}}
(( \xdisc_{k-1},  \ydisc_{k-1}),(\xdisc_k, \ydisc_k))  =
\end{multline*}
Repeated applications of lemma~\ref{thm:qn_to_pn_lemma}
conclude the proof.
\begin{multline*}
\int_{A_1} d\xdisc_1\cdots\int_{A_k}d\xdisc_k 
\int_{\Wdisc^n(\xdisc_1)}d\ydisc_1 \cdots\int_{\Wdisc^n(\xdisc_k)} d\ydisc_k
\pdisc^{n,+}_{t_1}(\bar \xdisc,\xdisc_1)
\lambda^n(\xdisc_1, \ydisc_1)
\qdisc^{n,+}_{t_2-t_1}(( \xdisc_1,  \ydisc_1),(\xdisc_2, \ydisc_2))
\cdots \\
\cdots
\qdisc^{n,+}_{t_k-t_{k-1}}
(( \xdisc_{k-1},  \ydisc_{k-1}),(\xdisc_k, \ydisc_k))  =
\end{multline*}
\begin{multline*}
\int_{A_1} d\xdisc_1\cdots\int_{A_k}d\xdisc_k 
\int_{\Wdisc^n(\xdisc_k)} d\ydisc_k
\pdisc^{n,+}_{t_1}(\bar \xdisc,\xdisc_1)
\pdisc^{n,+}_{t_2-t_1}(\xdisc_1,\xdisc_2)
\cdots 
\pdisc^{n,+}_{t_k-t_{k-1}}
( \xdisc_{k-1},\xdisc_k)  
\lambda^n(\xdisc_k, \ydisc_k)=\\
\int_{A_1} d\xdisc_1\cdots\int_{A_k}d\xdisc_k 
\pdisc^{n,+}_{t_1}(\bar \xdisc,\xdisc_1)
\pdisc^{n,+}_{t_2-t_1}(\xdisc_1,\xdisc_2)
\cdots 
\pdisc^{n,+}_{t_k-t_{k-1}}
( \xdisc_{k-1},\xdisc_k)  
\end{multline*}
\end{proof}

\section{Transition probabilities for the Aztec Diamond Process}
\label{sec:interl-from-aztec}
Let us now return to the process $(\Xdisc(t))$ that came from 
the shuffling algorithm.
Observe in the recursions~(\ref{eq:1}), 
the formulas that define $X^{k+1}$ 
contain $X^k$ but not $X^j$ for $j<k$. 
Thus $X^k$ is conditionally independent 
of $(X^1,\dots,X^{k-1})$ given $X^k$. 
Also, the dependence of $X^{k+1}$ on $X^k$ 
is the same as the dependence of 
$X$ on $Y$ in $\Qdisc^{k}$ and in $\Qdisc^{k,+}$, see~(\ref{eq:2}). 
This, together with theorem~\ref{thm:qn_to_pn} lends itself 
to an inductive procedure for constructing the process
$\Xdisc$.
\begin{enumerate}
\item
The process $(X^k(t), t=0,1,\dots)$ is started
at $X^k(0)=\bar x^k$ and has transition probabilities
governed by $\pdisc^{k,+}$ for $k=1$, $2$, \dots, $k$,
\item
By proposition~\ref{thm:qn_to_y}, $X^k$
can be considered as the $Y$ component of the process $Q^{k,+}$.
By the observation above, 
the pair of processes $(X^k(t),X^{k+1}(t))$
has the same distribution as $Q^{k,+}$
started at $(\bar x^k, \bar x^{k+1})$ and
are thus governed by transition probabilities $\qdisc^{k,+}$,
\item
The process $X^{k+1}$ is conditionally independent of $(X^1,\dots, X^{k-1})$ 
given $X^{k}$. 
\item
By  theorem~\ref{thm:qn_to_pn}, the process $X^{k+1}$ 
is governed  by transition probabilities $\pdisc^{k+1,+}$
and started at $X^{k+1}(0)=\bar x^{k+1}$.
\end{enumerate}

This proves theorem~\ref{thm:transition}.

\section{Asymptotics}
\renewcommand{\tdisc}{{\tilde t}}
\renewcommand{\xdisc}{{\tilde x}}
\renewcommand{\ydisc}{{\tilde y}}

\label{sec:asymptotics}
We shall now see  some results that
lend support to our conjecture that Warren's 
process $\Xcont$ can be recovered
as a scaling limit of $\Xdisc$, the process from the Aztec diamond.
Let us rescale time by $\tdisc=N\tcont$ 
and space by 
$\xdisc_i = \frac{1}{2}N\tcont+ \frac{1}{2}\sqrt{N}\xcont_i$
for $i=0,\dots,n$
in the above processes. 

First let us show how to recover the entrance law for Warrens process.
Recall that the Aztec diamond process the discrete process starts 
at $X^n(0)=\bar x^n$ and
 $X^{n+1}(0)=\bar x^{n+1}$.

\begin{lemma}
\label{thm:entrancelaw}
\begin{equation}
\left(\frac{\sqrt{N}}{2} \right)^{2n+1}
\qdisc_\tdisc^{n,+}( (\bar x^n, \bar x^{n+1}), (\xdisc, \ydisc))
\rightarrow 
\nu^n_t(x, y) 
\end{equation}
as $N\rightarrow\infty$ where $\nu^n_t(x, y) $ is given
by~(\ref{eq:3}).
\end{lemma}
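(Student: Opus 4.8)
The plan is to prove this as a local limit theorem. Because the two-line process is started at the minimal configuration $\bar x^{n+1}=(1,\dots,n+1)$, $\bar x^{n}=(1,\dots,n)$, which collapses to a single point under the diffusive rescaling $\tilde x_i=\frac12 Nt+\frac12\sqrt N\,x_i$, the statement is exactly the discrete counterpart of Warren's derivation of the entrance law $\nu^n_t$ (his Lemma~4) as the $(x,y)\to 0$ limit of $\qcont^{n,+}$. I would therefore expand every entry of the $(2n+1)\times(2n+1)$ matrix defining $\qdisc^{n}_{Nt}$ by the local central limit theorem and then read off the leading asymptotics of the determinant, keeping in mind throughout that the naive entrywise limits are degenerate, so the surviving contribution is of strictly smaller order than any generic term.

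First I would record the three asymptotic inputs. Writing $k=\frac12 Nt+\frac12\sqrt N\,\xi$, the local CLT for the binomial $\phidisc^{(Nt)}(k)=\binom{Nt}{k}2^{-Nt}$ gives $\frac{\sqrt N}{2}\,\phidisc^{(Nt)}(k)\to\phicont_t(\xi)$. Since the backward difference $\Delta$ is a unit step, of size $\frac{2}{\sqrt N}$ in $\xi$, the entries $\Delta\phidisc^{(Nt)}$ carry one extra factor $\frac{2}{\sqrt N}$ and converge, after the matching rescaling, to $\phicont_t'(\xi)$, while the summation operator $\Delta^{-1}\phidisc^{(Nt)}$ is a Riemann sum converging to $\Phi_t(\xi)$ with no extra power of $N$. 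These scalings are consistent with the normalisation $(\sqrt N/2)^{2n+1}$, one factor per row. I would also isolate the exact telescoping identity $\Delta^{-1}\phidisc^{(Nt)}(k)-\Delta^{-1}\phidisc^{(Nt)}(k-1)=\phidisc^{(Nt)}(k)$, since it is this identity, not merely the limit, that forces the cancellation described below.

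The heart of the matter is a degeneracy: the leading value $\frac{2}{\sqrt N}\phicont_t(x_i)$ of a heat-kernel entry $\phidisc^{(Nt)}(\tilde x_i-j)$ is independent of the order-one source index $j$, so the entrywise limit of the matrix is rank-deficient and the determinant is of smaller order than its generic term. To extract the true leading order I would keep the subleading source dependence, $\phidisc^{(Nt)}(\tilde x_i-j)\sim\frac{2}{\sqrt N}\phicont_t(x_i)\exp\{2x_i j/(\sqrt N t)\}$, and likewise for the other entry types; the powers of the $x_i$ that assemble the big-line Vandermonde $h_{n+1}(x)$ and the Gaussian $\exp\{-\sum_i x_i^2/2t\}$ then arise from this expansion together with the derivative entries $\Delta\phidisc^{(Nt)}$, for which $\phicont_t'=-\frac\xi t\phicont_t$ already carries a factor of $x$. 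At the same time the $y$-dependence coming from the CDF entries $\Delta^{-1}\phidisc^{(Nt)}$ cancels: by the telescoping identity their order-one parts $\Phi_t(y_i)$ drop out under the antisymmetry of the determinant, leaving only the integer indicators $\mathbf{1}\{j\ge i\}$. This is the discrete mechanism behind the fact that $\nu^n_t$ sees $y$ only through $h_n(y)$: the determinant converges, up to a power of $N$, to a $y$-independent multiple of $\exp\{-\sum_i x_i^2/2t\}\,h_{n+1}(x)$, while the factor $h_n(y)$ is supplied entirely by the $h$-transform prefactor $h_n(\tilde y)=(\sqrt N/2)^{\binom n2}h_n(y)$. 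In the first nontrivial case $n=1$ all of this can be carried out by hand and produces exactly $\frac1{Z_2}t^{-2}(x_2-x_1)\exp\{-(x_1^2+x_2^2)/2t\}$, fixing the power of $N$ and the constant.

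The main obstacle is to make this bookkeeping work for general $n$: one has to identify which minors dominate, show that every $\Phi_t(y_i)$ contribution cancels, and verify that the remaining indicator-weighted combination of heat and derivative contributions sums to exactly $h_{n+1}(x)\prod_i\phicont_t(x_i)$ rather than to some other antisymmetric polynomial, all while the three entry scalings ($N^{-1/2}$, $1$, $N^{-1}$) combine into the single power of $N$ that balances the prefactor and the $h$-transform. The safe way to control this is to follow Warren's Lemma~4 computation step by step, with $\phicont_t$, $\phicont_t'$, $\Phi_t$ replaced by $\phidisc^{(Nt)}$, $\Delta\phidisc^{(Nt)}$, $\Delta^{-1}\phidisc^{(Nt)}$, invoking the local CLT with an error bound uniform over the $O(1)$ range of source indices so that the $o(1)$ corrections vanish even after multiplication by the diverging prefactor; the constant $Z_{n+1}$ and the power $t^{-(n+1)^2/2}$ then come out of the same Gaussian integral as in the continuous computation. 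A convenient partial check is to integrate out the big line first, reducing the claim to the convergence of the rescaled non-colliding walk kernel $\det[\phidisc^{(Nt)}(\tilde y_i-j)]_{i,j=1}^{n}$ with its $h$-transform to the Dyson entrance law $\mu^n_t$; this is the classical single-line local limit theorem and pins down $Z_n$.
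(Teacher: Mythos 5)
Your route is genuinely different from the paper's, and your structural intuition is in fact stronger than you claim: the unnormalised kernel $\qdisc^n_{Nt}((\bar x^{n+1},\bar x^n),(\xdisc,\ydisc))$ is \emph{exactly} independent of $\ydisc$ at finite $N$, not merely in the limit. The paper exploits this algebraically rather than asymptotically: since $\Wdisc^n(\bar x^{n+1})$ contains the single point $\bar x^n$ and $\lambda^n(\bar x^{n+1},\bar x^n)=1$, the intertwining of lemma~\ref{thm:qn_to_pn_lemma} yields the finite-$N$ identity $\qdisc^{n,+}_t((\bar x^{n+1},\bar x^n),(x,y))=\pdisc^{n+1,+}_t(\bar x^{n+1},x)\,\lambda^n(x,y)$. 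This supplies the factor $h_n(y)$ exactly through $\lambda^n$, with no cancellation bookkeeping, and reduces the lemma to the asymptotics of the single $(n+1)\times(n+1)$ determinant $\det\bigl[2^{-t}\binom{t}{x_i-j}\bigr]$, which the paper evaluates in closed product form by Krattenthaler's theorem; after that only Stirling's formula applied to an explicit product is needed. Your telescoping identity is the germ of this factorization, but you stop short of using it exactly, and that is where the gap opens.

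Concretely: you propose to expand each entry by the local CLT (keeping the subleading dependence on the $O(1)$ source index $j$) and then ``read off'' the determinant's leading order, with errors controlled by uniformity in $j$. But the surviving term is cancellation-suppressed relative to the generic entries by the source Vandermonde, a factor of order $N^{-\binom{n+1}{2}/2}$ (sources differ by unit lattice steps, i.e.\ $O(N^{-1/2})$ in rescaled units), while the symmetric-binomial local CLT carries a relative error $O(1/N)$ per entry. By multilinearity, a single error column against otherwise nearly-parallel main columns contributes at order $N^{-\binom{n}{2}/2-1}$, which already matches the main term at $n=2$ and dominates it for $n\geq 3$; so uniform entrywise $o(1)$ (even $O(1/N)$) control does not justify the interchange, and your $n=1$ hand check, where the suppression is only $N^{-1/2}$, cannot expose this. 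The repair is to difference \emph{exactly} before taking any limits --- column operations converting source shifts into discrete derivatives $\Delta^k\phidisc^{(Nt)}$ (and the matching operations in the $B$ and $C$ blocks via your telescoping identity), after which the determinant is entrywise non-degenerate and entrywise limits to derivatives of $\phicont_t$ suffice --- or, more simply, the paper's exact reduction above. Finally, the fallback you lean on for the general-$n$ bookkeeping does not exist as described: Warren's lemma~4, as used in this paper, is the semigroup verification that $\nu^n_{t+s}$ equals $\nu^n_s$ integrated against $\qcont^{n,+}_t$, not a derivation of $\nu^n_t$ as the $(x,y)\to 0$ limit of $\qcont^{n,+}_t$, so there is no continuous template whose steps you can discretise, and as written the heart of your argument for general $n$ remains open.
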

\begin{proof}
Observe that we can write 
\begin{equation}
2^{-2n-1}\sqrt{N}^{2n+1}
\qdisc_\tdisc^{n,+}( (\bar x^{n+1}, \bar x^n), (\xdisc, \ydisc))
=2^{-2n+1}\sqrt{N}^{2n+1}
\int_{\bar x^n\in\Wdisc(\bar x^{n+1})}
\lambda^n(\bar x,\bar y)
\qdisc_\tdisc^{n,+}( (\bar x, \bar y), (\xdisc, \ydisc)) \; d\bar x^n
\end{equation}
where $d\bar x^n$ is 
counting measure on the space $\Wdisc(\bar x^{n+1})$ which 
has only one element.
Then we apply lemma~\ref{thm:qn_to_pn_lemma}.
\begin{equation}
=2^{-2n+1}\sqrt{N}^{2n+1}
\pdisc_\tdisc^{n+1,+}( \bar x^{n+1},\xdisc)
\lambda^n(\xdisc,\ydisc)
\end{equation}
which can be written explicitly
as 
\begin{equation}
=2^{-2n+1}\sqrt{N}^{2n+1}n!
\frac{h_n(\ydisc)}{h_{n+1}(\bar x^{n+1})}
\det\left[2^{-\tdisc} \binom{\tdisc}{\xdisc_i-j}\right]_{1\leq i,j\leq n+1}
\end{equation}
which can be evaluated by a formula of Krattenthaler (Theorem 26 of~\cite{krattenthaler:advanced}). 
Applying Stirling's approximation to the result shows our theorem
with $Z_{n}=(2\pi)^{n/2}\prod_{j<n}j!$.
\end{proof}

Likewise, Warren's expression 
for $q^n_t$ can be recovered as
a scaling limit from our expression for $\qdisc_t^n$.
By Stirling's approximation, 
\begin{align}
\frac{1}{2}\sqrt{N}
\phidisc^{(\tdisc)} (  \xdisc ) 
&\rightarrow \phicont_t( \xcont), &
\Delta^{-1}
\phidisc^{(\tdisc)} (  \xdisc ) 
&\rightarrow \int_{-\infty}^x\phicont_t( \ycont)\, d\ycont
\end{align}
and 
\begin{equation}
\frac{1}{4}N\Delta
\phidisc^{(\tdisc)} (  \xdisc ) 
\rightarrow \frac{d}{d\xcont}\phicont_t( \xcont)
\end{equation}
uniformly on compact sets as $N\rightarrow\infty$ 
where 
\begin{equation}
\phicont_t(x)=\frac{1}{\sqrt{2\pi t}}e^{-\frac{x^2}{2t}}.
\end{equation}

\begin{lemma}
\label{thm:transition_density}
For $\xdisc_i=\frac12Ns +\frac{1}{2} \sqrt{N}\xcont_i$,
$\xdisc_i'=\frac12N(s+t) +\frac{1}{2} \sqrt{N}\xcont_i'$,
and the same relations for $\ydisc$ and $\ydisc'$,
\begin{equation}
(\frac{\sqrt{N}}{2})^{2n+1}
\qdisc_{Nt}^n ((\xdisc, \ydisc) , (\xdisc', \ydisc')) \rightarrow
\qcont_t^n((\xcont, \ycont), (\xcont', \ycont'))
\end{equation}
uniformly on compact sets as $N\rightarrow\infty$.
\end{lemma}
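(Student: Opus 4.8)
The plan is to read the limit directly off the determinantal formula for $\qdisc^n_{Nt}$, combining the three entrywise convergences displayed just before the statement with the multilinearity of the determinant. Recall that $\qdisc^n_{Nt}((\xdisc,\ydisc),(\xdisc',\ydisc'))$ is the determinant of a $(2n+1)\times(2n+1)$ matrix $M_N$ assembled from the four blocks $A_{Nt},B_{Nt},C_{Nt},D_{Nt}$, whose entries are respectively $\phidisc^{(Nt)}$, $\Delta^{-1}\phidisc^{(Nt)}-\mathbf{1}\{\cdot\}$, $\Delta\phidisc^{(Nt)}$ and $\phidisc^{(Nt)}$ evaluated at the relevant coordinate differences, while Warren's $\qcont^n_t$ is the determinant of the analogous matrix $M_\infty$ with entries $\phicont_t$, $\Phi_t-\mathbf{1}(\cdot)$, $\phicont_t'$ and $\phicont_t$.

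First I would substitute the scaling relations into each entry. For an $A$-entry this gives $\phidisc^{(Nt)}(\xdisc_i'-\xdisc_j)$ with $\xdisc_i'-\xdisc_j=\frac{1}{2}Nt+\frac{1}{2}\sqrt{N}(\xcont_i'-\xcont_j)$, so that after multiplication by $\frac{1}{2}\sqrt{N}$ it tends to $\phicont_t(\xcont_i'-\xcont_j)$ by the first displayed limit; the $D$-entries behave identically. The same substitution turns the $B$-entries into $\Delta^{-1}\phidisc^{(Nt)}(\ydisc_i'-\xdisc_j)-\mathbf{1}\{j\ge i\}$, which already converge, with no prefactor, to $\Phi_t(\ycont_i'-\xcont_j)-\mathbf{1}(j\ge i)$ (the indicator is literally the same in both models), and the $C$-entries into $\Delta\phidisc^{(Nt)}(\ydisc_i'-\xdisc_j)$, which after multiplication by $\frac{1}{4}N$ tend to $\phicont_t'(\ycont_i'-\xcont_j)$. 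Thus each entry of $M_N$ converges to the corresponding entry of $M_\infty$ once multiplied by $\frac{1}{2}\sqrt{N}$, $1$, $\frac{1}{4}N$ or $\frac{1}{2}\sqrt{N}$ according to its block.

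It remains to check that these four per-block factors are exactly absorbed by the single prefactor $(\frac{\sqrt{N}}{2})^{2n+1}$, and this is where multilinearity enters. The required factors obey the compatibility relation $\frac{1}{2}\sqrt{N}\cdot\frac{1}{2}\sqrt{N}=1\cdot\frac{1}{4}N$ (note that this is precisely what forces the $B$-block factor to be $1$, matching the untouched indicator), so they factor as a row factor times a column factor: leaving the top $n+1$ rows and the right $n$ columns unscaled and multiplying the bottom $n$ rows and the left $n+1$ columns each by $\frac{1}{2}\sqrt{N}$ realizes exactly the scalings $\frac{1}{2}\sqrt{N}$, $1$, $\frac{1}{4}N$, $\frac{1}{2}\sqrt{N}$ on $A,B,C,D$. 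Writing this as $M_N\mapsto LM_NR$ with diagonal $L,R$, multilinearity gives $\det(LM_NR)=(\det L)(\det R)\det M_N$ with $(\det L)(\det R)=(\frac{1}{2}\sqrt{N})^{n}(\frac{1}{2}\sqrt{N})^{n+1}=(\frac{\sqrt{N}}{2})^{2n+1}$, so $LM_NR$ is precisely the matrix whose determinant is $(\frac{\sqrt{N}}{2})^{2n+1}\qdisc^n_{Nt}$.

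By the preceding paragraph $LM_NR\to M_\infty$ entrywise, and since $(\xcont,\ycont,\xcont',\ycont')$ ranges over a compact set the arguments of the limiting functions stay bounded, so the convergence of every entry is uniform there. The determinant is a fixed polynomial in the $(2n+1)^2$ entries, hence continuous and sending bounded uniformly-convergent families to uniformly-convergent ones; therefore $\det(LM_NR)\to\det M_\infty$ uniformly on compact sets, which is the claim. The only genuine analytic input is the three entrywise local central limit theorems quoted before the lemma, together with the uniformity of the Stirling estimates behind them; once those are granted the determinant and prefactor bookkeeping is purely algebraic, so I expect the main care to lie in confirming that the local limit theorems hold uniformly over compact ranges of the rescaled coordinates rather than merely pointwise.
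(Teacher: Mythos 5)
Your proposal is correct and is essentially the paper's argument: the paper's proof is the one-line instruction to insert the stated limit relations for $\phidisc^{(Nt)}$, $\Delta^{-1}\phidisc^{(Nt)}$ and $\Delta\phidisc^{(Nt)}$ into the determinantal formula for $\qdisc^n_{Nt}$, and you have simply made explicit the bookkeeping it leaves implicit. Your row/column factorization of the prefactor (bottom $n$ rows and left $n+1$ columns each scaled by $\frac{1}{2}\sqrt{N}$, using $\frac{1}{2}\sqrt{N}\cdot\frac{1}{2}\sqrt{N}=1\cdot\frac{1}{4}N$ to match the four block scalings and yield $(\frac{\sqrt{N}}{2})^{2n+1}$) is exactly the right verification, and your closing caveat about uniformity of the local limit theorems is the only genuine analytic content, as the paper also tacitly assumes.
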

\begin{proof}
Just insert the limit relations given above for $\phidisc$ and
$\phicont$
in the explicit expression for $\qdisc^n_t$.
\end{proof}

Finally, one of the  main results of this article is the
following.

\begin{thm}
\label{thm:Xdisc_to_Xcont}
The process $(X^k(\tdisc), X^{k+1}(\tdisc))$ from $\Xdisc$, extended
by interpolation to non-integer times $\tdisc$,
converges in the sense of finite dimensional distributions 
to the process  $(X^k(t), X^{k+1}(t))$ from $\Xcont$.
\end{thm}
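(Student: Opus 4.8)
The plan is to reduce the statement to the local limit theorems already in hand (Lemmas~\ref{thm:entrancelaw} and~\ref{thm:transition_density}) and then to promote that local, density-level convergence to weak convergence of the finite dimensional distributions.

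First I would record that, by Theorem~\ref{thm:transition}, the pair $(X^{k+1},X^k)$ is a Markov chain on $\Wdisc^{k+1,k}$ with transition probabilities $\qdisc^{k,+}$, started deterministically at $(\bar x^{k+1},\bar x^k)$. Hence for times $0<t_1<\dots<t_m$ its distribution at the rescaled clock values $Nt_1,\dots,Nt_m$ is the product
\[
\qdisc^{k,+}_{Nt_1}\bigl((\bar x^{k+1},\bar x^k),z_1\bigr)\,\prod_{j=2}^m \qdisc^{k,+}_{N(t_j-t_{j-1})}(z_{j-1},z_j),
\]
with $z_j=(x_j,y_j)$ ranging over $\Wdisc^{k+1,k}$. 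After the spatial rescaling of Section~\ref{sec:asymptotics} and multiplication by $(\sqrt N/2)^{(2k+1)m}$, the first factor converges to the entrance law $\nu^k_{t_1}$ by Lemma~\ref{thm:entrancelaw}, and each remaining factor converges to Warren's kernel $\qcont^{k,+}_{t_j-t_{j-1}}$ by Lemma~\ref{thm:transition_density}; for the latter one only needs to observe that the Vandermonde ratio $h_k(y')/h_k(y)$ relating $\qdisc^k$ to $\qdisc^{k,+}$ (and $\qcont^k$ to $\qcont^{k,+}$) is invariant under the affine rescaling, since the common shift cancels in the differences $y_j'-y_i'$ and $y_j-y_i$. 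As all these convergences are uniform on compact sets, the rescaled joint density $f_N$ converges, uniformly on compacts, to
\[
F(z_1,\dots,z_m)=\nu^k_{t_1}(z_1)\,\prod_{j=2}^m\qcont^{k,+}_{t_j-t_{j-1}}(z_{j-1},z_j),
\]
which by Proposition~\ref{thm:xfer} is precisely the finite dimensional density of $(X^k,X^{k+1})$ from $\Xcont$.

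It then remains to upgrade this local convergence to weak convergence. The decisive point is that both sides are genuine probability measures: $\qdisc^{k,+}$ is a conservative transition kernel and $\nu^k_{t_1}$ an honest entrance law, so $f_N$ has total mass one, while $F$ integrates to one because it is the finite dimensional density of the well-defined process $\Xcont$. For a box $K$ with negligible boundary the rescaled point masses form a Riemann sum, so uniform convergence on $K$ gives $\mu_N(K)\to\int_K F$, where $\mu_N$ denotes the law of the rescaled vector. Mass preservation then yields
\[
\limsup_N \mu_N(K^c)=1-\textstyle\int_K F=\int_{K^c}F,
\]
which is arbitrarily small for $K$ large since $F$ is integrable; this is exactly tightness. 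Every subsequential weak limit therefore assigns mass $\int_K F$ to each such box and so coincides with the measure of density $F$, giving weak convergence of the whole sequence, that is, convergence of finite dimensional distributions.

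Finally I would dispose of the interpolation. At a rescaled time $t_j$ the discrete clock reads $Nt_j$, generally non-integer, and the linear interpolant differs from the value at the nearest integer time by a single increment, hence by $O(1/\sqrt N)$ after rescaling; moreover $\lfloor Nt_j\rfloor/N\to t_j$ and $\lfloor Nt_j\rfloor-\lfloor Nt_{j-1}\rfloor=N(t_j-t_{j-1})+O(1)$, so the kernels over these step counts have the same limit as in Lemma~\ref{thm:transition_density}, and a Slutsky-type argument shows the interpolated finite dimensional distributions share the limit above. I expect the genuine obstacle to be exactly this local-to-weak passage, i.e.\ ruling out escape of mass to infinity; the mass-preservation argument settles it cleanly, but should it ever fail one would instead establish uniform sub-Gaussian domination of the rescaled kernels, obtained from a quantitative local central limit theorem for the convolution powers $\phidisc^{(t)}$ together with control of the polynomial $h$-factors, and then invoke dominated convergence.
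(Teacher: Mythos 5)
Your proposal is correct and follows essentially the same route as the paper's proof: write the finite dimensional distributions as products of the discrete kernels started from the packed configuration, invoke Lemmas~\ref{thm:entrancelaw} and~\ref{thm:transition_density} for uniform convergence on compact sets, and pass from the Riemann sums to the integral of $\nu^k_{t_1}\prod_j \qcont^{k,+}_{t_j-t_{j-1}}$. The only difference is that you spell out what the paper compresses into the phrase ``a Riemann-sum converging to an integral''---the scale-invariance of the Vandermonde ratio $h_k(y')/h_k(y)$, the mass-preservation argument ruling out escape of mass (tightness), and the Slutsky-type treatment of the interpolation at non-integer times---all of which are handled correctly.
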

\begin{proof}
For times $t_1$, \dots, $t_m$, and 
compact sets $A_1$, \dots, $A_m\in \Wcont^{n,n+1}$, 
we need to study
\begin{multline}
\lim_{N\rightarrow\infty}
\int_{A_1} d\xdisc_1 d\ydisc_1
\cdots
\int_{A_m} d\xdisc_m d\ydisc_m\times \\
\times\qdisc^n_{\tdisc_1} ((\bar x, \bar y), ( \xdisc_1, \ydisc_1))
\qdisc^n_{\tdisc_2-\tdisc_1} (( \xdisc_1,  \ydisc_1), ( \xdisc_2, \ydisc_2))
\cdots
\qdisc^n_{\tdisc_m-\tdisc_{m-1}}((\xdisc_{m-1},  \ydisc_{m-1}), (\xdisc_m,\ydisc_m))
\end{multline}
where of course $d\xdisc_id\ydisc_i$ 
is point measure on $\Wdisc^{n, n+1}$. 
This is a case of a Riemann-sum converging to an integral. 
By the uniform convergence of  $\qdisc^n_\tdisc$ 
in lemmas~\ref{thm:entrancelaw} and~\ref{thm:transition_density}, 
we can interchange the order of integration
and taking the limit. This gives 
\begin{multline}
=\int_{A_1} d\xcont_1 d\ycont_1
\cdots
\int_{A_k} d\xcont_k d\ycont_k\times\\
\times\nu^n_{t_1} ( \xcont_1, \ycont_1)
\qcont^n_{t_2-t_1} (( \xcont_1, \ycont_1), ( \xcont_2, \ycont_2))
\cdots
\qcont^n_{t_k-t_{k-1}}((\xcont_{k-1},\ycont_{k-1}), (\xcont_k,\ycont_k))
\end{multline} 
which proves our theorem.
\end{proof}

Theorem \ref{thm:dyson_bm} follows from 
theorem~\ref{thm:Xdisc_to_Xcont} by just restricting to $(X^k)$. 
We feel that given this theorem together 
with the fact that $X^{k+1}$ is conditionally independent of
$X^{1},\dots,X^{k-1}$ given $X^k$, lends a lot of credibility 
to the conjecture in the introduction.

We can also say something about the limit at a fixed time.
Let $\Kdisc$ be the cone of points $x= (x^1, \dots, x^n) $
with $x^k=(x^k_1,\dots,x^k_k)\in \mathbb{Z}^k$ 
such that
\begin{equation}
x^{k+1}_{i} \leq x^{k}_i < x^{k+1}_{i+1}.
\end{equation} 
For each $x^n\in \Wdisc^n$ we will denote by $\Kdisc(x^n)$ 
the set of all $(x^1,\dots, x^{n-1})$ such that $(x^1,\dots, x^{n-1}, x^n)\in \Kdisc$.
The number of points in $\Kdisc(x^n)$ is 
\begin{equation}
\card(\Kdisc(x^n)) = \frac{h_n(x^n)}{\prod_{k<n} k!}.
\end{equation}

It follows from the characterisation in section~\ref{sec:interl-from-aztec}
that at a fixed time the distribution of $X^{n-1}(t)$ given 
$X^n(t)$ is $\lambda^{n-1}(X^n(t) , \cdot ) $. 
Together with the conditional independence noted in that section
this implies that the distribution of $(X^1(t), \dots, X^{n-1}(t))$
given $X^n(t)$ is uniform in $\Kdisc(X^n(t))$. 
So the probability distribution of $\Xdisc(t)$ 
is 
\begin{equation}
\label{eq:13}
m^n_t ( x )  = 
\pdisc_t^n (\bar x^n, x^n) 
\frac{\chi(x^1, x^2) \dots\chi(x^{n-1}, x^n)}{\card(\Kdisc(x^n ))  }
\end{equation}
where $\chi(x^k,x^{k+1})$ 
is one iff $ x^{k+1}_{i} \leq x^{k}_i < x^{k+1}_{i+1}$
for all $i=1$, \dots, $k-1$ and zero otherwise.

For $x^n\in \Wcont^n$,  define $\Kcont(x^n)$
as the set of $(x^1, \dots, x^{n-1})$ where $x^k\in \mathbb{R}^k$
 satisfying   $ x^{k+1}_{i} \leq x^{k}_i \leq x^{k+1}_{i+1}$.
The $n(n-1)/2$-dimensional volume of $\Kcont(x^n)$ 
is 
\begin{equation}
\vol(\Kcont(x^n)) = \frac{h_n(x^n)}{\prod_{k<n} k!}.
\end{equation}

\begin{thm}
Consider the process $(\Xdisc(t))_{t\in \mathbb{Z}^+}$ under the rescaling
\begin{equation}
\tilde X^{k}_i = \frac{X^{k}_i(N)  - \frac{1}{2} N} {\frac12 \sqrt{N}}.
\end{equation}
As $N\rightarrow \infty$, $\tilde X\rightarrow \Lambda$ weakly where
$\Lambda$ has distribution
\begin{equation}
\mu^n_1(x^n) 
\frac{\chi(x^1, x^2)  \dots \chi(x^{n-1}, x^n)  }{\vol(\Kcont(x^n)}.
\end{equation}
\end{thm}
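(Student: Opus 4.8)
The plan is to start from the exact fixed-time law of $\Xdisc(N)$ recorded in~(\ref{eq:13}),
\[
m^n_N(x)=\pdisc_N^{n}(\bar x^n,x^n)\,\frac{\chi(x^1,x^2)\cdots\chi(x^{n-1},x^n)}{\card(\Kdisc(x^n))},
\]
apply the stated rescaling, and show that the rescaled mass function, multiplied by $(\sqrt{N}/2)^{n(n+1)/2}$, converges pointwise to the density in the statement; upgrading this local convergence to weak convergence is then a routine tightness argument. Since $\Xdisc(N)$ has $n(n+1)/2$ coordinates, I would split the normalising power as $(\sqrt{N}/2)^{n(n+1)/2}=(\sqrt{N}/2)^{n}\cdot(\sqrt{N}/2)^{n(n-1)/2}$, assigning the first factor to the $n$ coordinates of the top line $X^n$ and the second to the remaining $n(n-1)/2$ coordinates $x^1,\dots,x^{n-1}$, so that the two factors of $m^n_N$ can be treated independently.

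For the top line, the quantity $\pdisc_N^{n}(\bar x^n,x^n)$ is the marginal law of $X^n(N)$, which by Corollary~\ref{thm:discrete_dyson_bm} is the discrete Dyson Brownian motion started at $\bar x^n$; it equals a ratio of Vandermonde factors $h_n(x^n)/h_n(\bar x^n)$ times a determinant of the $\phidisc^{(N)}$'s. I would show
\[
(\sqrt{N}/2)^{n}\,\pdisc_N^{n}(\bar x^n,x^n)\longrightarrow\mu^n_1(x^n)
\]
under the rescaling, by the same Krattenthaler determinant evaluation followed by Stirling's approximation already used for the entrance law in Lemma~\ref{thm:entrancelaw}; indeed $\mu^n_1$ is precisely the result of integrating the $x$-variables out of $\nu^n_1$, mirroring the relation between the entrance laws~(\ref{eq:14}) and~(\ref{eq:3}).

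The conditional factor scales exactly, which is the pleasant part. As $h_n$ is a product of $\binom{n}{2}=n(n-1)/2$ differences and each difference gains a factor $\sqrt{N}/2$ under the rescaling, one has $h_n(x^n)=(\sqrt{N}/2)^{n(n-1)/2}h_n(\tilde x^n)$. Because $\card(\Kdisc(x^n))=h_n(x^n)/\prod_{k<n}k!$ and $\vol(\Kcont(\tilde x^n))=h_n(\tilde x^n)/\prod_{k<n}k!$ have identical form, this yields the exact identity
\[
(\sqrt{N}/2)^{n(n-1)/2}\,\frac{\chi(x^1,x^2)\cdots\chi(x^{n-1},x^n)}{\card(\Kdisc(x^n))}=\frac{\chi(x^1,x^2)\cdots\chi(x^{n-1},x^n)}{\vol(\Kcont(\tilde x^n))},
\]
with no error term, and in the limit the discrete interlacing indicators converge to $\mathbf{1}\{(\tilde x^1,\dots,\tilde x^{n-1})\in\Kcont(\tilde x^n)\}$ since the boundary carries no mass. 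Multiplying the two limits produces exactly $\mu^n_1(x^n)\,\chi(x^1,x^2)\cdots\chi(x^{n-1},x^n)/\vol(\Kcont(x^n))$.

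It remains to upgrade pointwise convergence of the rescaled mass function to weak convergence. I would verify that the limit is a genuine probability density — its integral is $1$ because $\mu^n_1$ integrates to $1$ and the uniform law on $\Kcont(x^n)$ integrates to $1$ — and then dominate the rescaled binomials by uniform Gaussian tail bounds to rule out escape of mass, so that Scheffé's lemma gives convergence in distribution. The main obstacle I expect is this last step together with the top-line asymptotics: making the Stirling control of the determinant $\pdisc_N^{n}$ uniform enough, in particular near the collision set where $h_n(x^n)$ is small, to justify the interchange of limit and normalisation. What makes the argument robust is the exact volume identity above, since the factor $h_n(x^n)^2$ inside $\mu^n_1$ cancels the $1/h_n(x^n)$ coming from the conditional law and leaves the integrable density proportional to $e^{-|x^n|^2/2}h_n(x^n)$ on the cone.
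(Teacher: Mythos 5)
Your proposal is correct and takes essentially the same route as the paper: the paper's own proof is a one-liner that inserts the rescaling into~(\ref{eq:13}) and repeats the Krattenthaler--Stirling determinant computation from the proof of Lemma~\ref{thm:entrancelaw}, which is precisely your top-line limit $(\sqrt{N}/2)^n\,\pdisc_N^{n,+}(\bar x^n,\cdot)\rightarrow\mu^n_1$. Your exact Vandermonde scaling identity converting $\card(\Kdisc(x^n))$ into $\vol(\Kcont(\tilde x^n))$, and the Scheff\'e/tightness upgrade from local to weak convergence, simply make explicit the steps the paper leaves implicit.
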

The expression for $\mu_1^n$ is given in equation~(\ref{eq:14}).
\begin{proof}
Put in the correct rescaling in~(\ref{eq:13}) and perform a computation
that is practically the same as that in the proof of~\ref{thm:entrancelaw}.
\end{proof} 
This distribution $\Lambda$ also happens to be the distribution 
of $\Xcont(1)$, which is consistent with our conjecture. 
It has been studied~\cite{baryshnikov:gue_queu} and is the distribution
of the GUE minor process mentioned in the introduction. 
This proves theorem~\ref{thm:GUE_process}.

\renewcommand{\tdisc}{t}
\renewcommand{\xdisc}{x}
\renewcommand{\ydisc}{y}

\section{Closing Remarks}

Looking at the expression of transition probabilities $\qdisc^{n,+}_t$
it is natural to ask the question, what happens if we plug in a
different $\phidisc$  than $\frac12(\delta_0+\delta_1)$
into that determinantal formula?
It turns out that for many other $\phi$ this gives 
a valid transition probability, although 
we do not fully understand why the Doob $h$-conditioning
still works in that case.  
It would be interesting to see sufficient and necessary conditions
on $\phi$ for this construction to work.

We should mention an article by Dieker and
Warren, \cite{dieker:determinantal}.
They study only  the top and bottom particles separately 
from our model, i.e. in our language
$(X^1_1(t), X^2_2(t), \dots, X^{n}_n(t))$ 
and $(X^1_1(t), X^2_1(t), \dots, X^{n}_1(t))$ from $\Xdisc(t)$.
They consider both geometric jumps 
($\phidisc=(1-q)(\delta_0+q\delta_1+q^2\delta^2+\dots)$, for $0<q<1$) and 
Bernoulli jumps ($\phidisc=p\delta_0 + q\delta_1$ where $p+q=1$). 
They write down transition probabilities but do not
do the rescaling to obtain a process in continuous time and space.

Another reference  worthy of attention is 
\cite{johansson:multidimenstional}, by Johansson.
He considers only
geometric jumps and studies only the top particles 
$(X^1_1(t), X^2_2(t), \dots, X^{n}_n(t))$ 
from our model,  with a slight change of 
variables that is of no real importance.
He not only writes down transition probabilities, 
but also  recovers the top particles Warren's process $\Xcont$ 
as  the limit of his process properly rescaled.

All the results proved in this article can be generalised to 
$\phidisc=p\delta_0 + q\delta_1$ where $p+q=1$. 
It is also not very difficult given my calculations
to write down transition probabilities for the top particles
and to rescale that to obtain the top particles in Warren's continuous process,
analogous to Johansson \cite{johansson:multidimenstional} but 
with Bernoulli jumps. 
We have not written included that calculation here 
 since we don't think it ads much to our knowledge of these processes,
but it is a fact that adds to the plausability of  the conjecture 
of this article.

\clearpage
\bibliography{art}

\end{document}